\documentclass[12pt,leqno]{amsart}
\usepackage{amssymb,amsmath,amsthm,bm,scalerel,stackengine}
\usepackage{amssymb,amsmath,amsthm,bm,scalerel,stackengine}
\usepackage{comment}
\usepackage{graphicx}
\usepackage{color}
\usepackage[textsize=tiny]{todonotes}
\usepackage[normalem]{ulem}
\usepackage[bookmarksopen,bookmarksdepth=3,colorlinks,citecolor=red,pagebackref,hypertexnames=ue]{hyperref}
\usepackage[msc-links,nobysame,non-sorted-cites, initials]{amsrefs}
\usepackage[inline]{enumitem}
\usepackage{mathtools}
\mathtoolsset{showonlyrefs}
\usetikzlibrary{quotes,arrows.meta}

\definecolor{darkblue}{RGB}{0,0,160}

\usepackage[lining]{libertine}   
\usepackage{cabin}
\usepackage[libertine]{newtxmath}

\usepackage[T1]{fontenc}

\def\eps{\varepsilon}
\def\d{{\rm d}}
\def\R {\mathbb{R}}
\def\N {\mathbb{N}}

\def\Tr {{\mathrm{Tr}}}
\def\supp {{\mathrm{supp}\,}}
\def\Z {{\mathbb Z}}

\def\CC{\mathbb{C}}

\newcommand{\TT}{\mathbb{ T} }

\newcommand{\F}{\mathcal{F}}

\def\ind{\cic{1}}
\newcommand{\cic}{\bm}

\numberwithin{equation}{section}
\numberwithin{counter2}{section}
\newtheorem{proposition}[subsection]{Proposition}
\newtheorem{theorem}[counter]{Theorem}

\newtheorem{lemma}[subsection]{Lemma}

\theoremstyle{definition}
\newtheorem{definition}[subsection]{Definition}
\newtheorem*{remark*}{Remark}
\newtheorem*{warn*}{A word of warning}

\newtheorem{remark}[subsection]{Remark} 
\theoremstyle{plain}

\newcommand{\norm}[1]{\left\Vert #1\right\Vert}
\newcommand{\ep}{\varepsilon}

\newcommand{\floor}[1]{\left\lfloor #1 \right\rfloor}
\newcommand{\ceiling}[1]{\left\lceil #1 \right\rceil}
\usepackage{scalerel} 

\newcommand{\EE}{\mathbb{E}}

\newcommand{\de}{\delta}

\newcommand{\beqq}{\begin{align*}}
	\newcommand{\eeqq}{\end{align*}}
    \DeclareMathOperator{\Log}{Log}
\title{ Polynomial Ergodic Averages Along  Short Intervals 
}

\author[A. Fragkos]{Anastasios Fragkos}
\address[A. Fragkos]{Department of Mathematics,
Georgia Tech \\
686 Cherry Street, Atlanta, GA 30332-0160 USA}
\author[H. Mousavi]{Hamed Mousavi}
\address[H. Mousavi]{Department of Mathematics,
University of Bristol \\
 Woodland Road, Bristol, BS8 1UG}
\author[A. Stokolosa]{Amelia Stokolosa}
\address[A. Stokolosa]{Department of Mathematics,
Georgia Tech \\
686 Cherry Street, Atlanta, GA 30332-0160 USA}

\begin{document}
    	\begin{abstract}
We study pointwise convergence of polynomial ergodic averages over short intervals whose left endpoints tend to infinity. For a polynomial orbit of degree $d\geq2$ and doubly lacunary starting times, we prove $L^p$ variational estimates, and hence almost-everywhere convergence, for $1<p<\infty$ in the range $c>(d-1)/d$. This gives the first pointwise ergodic theorem for polynomial orbits along short intervals. We also show that the endpoint $L^1$ fails along every infinite subsequence. In a different direction, we prove that substantially denser sequences of starting times exhibit the strong sweeping-out property.
	\end{abstract}
	\maketitle

%\tableofcontents

	\section{Introduction}

Birkhoff's ergodic theorem is the starting point for pointwise convergence of ergodic averages \cite{Birkhoff1931}. For nonlinear arithmetic times, Bourgain's
foundational work established almost-everywhere convergence of polynomial
ergodic averages for $L^p$, $p>1$, and initiated the circle-method approach to
such questions \cite{Bourgain_88_Lp,Bourgain_88_max,Bourgain_89_arithmetic_sets}.
Quantitative and variational refinements have since become an important part of
the theory; see, for instance,
\cite{Krause2014,MirekSteinTrojan2017,MirekSteinZorinKranich2020}. Buczolich and Mauldin proved that the square
averages are universally $L^1$-bad, in the sense that on every non-atomic
ergodic system there exists an $L^1$ function for which the corresponding
averages fail to converge almost everywhere. LaVictoire extended this mechanism
to $d$-th powers and the primes, showing moreover that the badness persists
along every subsequence \cite{BM,L9}. 

Let $(X,\mathcal B,\mu,T)$ be an invertible measure-preserving system, let
$P\in\mathbb Z[t]$ have degree $d\geq2$, and fix $0<c<1$.  We write
\[A_N^{P,c}f(x):=\frac1{\floor{N^c}}\sum_{h=1}^{\floor{N^c}}
f\bigl(T^{P(N+h)}x\bigr).
\]
The harmless choice of one endpoint of the short interval will be used
interchangeably below.
For $\lambda \in \N $, set
\[
N_k=N_k(\lambda):=\lambda^{\lambda^k},
\qquad
\mathbb D_\lambda:=\{N_k:k\in\mathbb N\}.
\]
Thus $\mathbb D_\lambda$ is doubly lacunary: its logarithms are themselves
lacunary.

For a single function,
the short-interval polynomial analogue of von Neumann's theorem follows from
the spectral theorem and Weyl's criterion: after spectral reduction, one is
precisely estimating polynomial exponential sums on intervals
$[N,N+\floor{N^c}]$.  For multiple functions, the Host--Kra theory and Leibman's
extension to arbitrary F{\o}lner sequences give the corresponding $L^2$ norm
convergence \cite{HostKra2005,LeibmanFolner2005}; indeed,
$[N,N+\floor{N^c}]$ is a F{\o}lner sequence in $\mathbb Z$ for every $c>0$.  In the
arithmetically thinner prime setting, the short-interval uniformity estimates
of Matom\"aki, Shao, Tao and Ter\"av\"ainen imply $L^2$ convergence for bounded
multiple ergodic averages (and along primes) in windows with
$N^{5/8+\varepsilon}\leq H\leq N^{1-\varepsilon}$
\cite{MatomakiShaoTaoTeravainen2023}.  These results also indicate why
quantitative short-interval questions are governed by exponential-sum
cancellation.

The sweeping-out phenomenon sharply separates norm from pointwise convergence: moving intervals may preserve von Neumann's $L^2$ convergence while destroying Birkhoff-type almost-everywhere convergence. For the moving averages
\[
A_{N}^{L_N} f(x) \coloneqq \frac{1}{L_N}\sum_{h \in (0,L_N]} f(T^{N+h} x),
\]
the work of Bellow--Jones--Rosenblatt and of Rosenblatt--Wierdl shows that pointwise convergence depends sensitively on the relative position and length of the averaging intervals $(N,N+L_N]$, with suitable geometric separation yielding convergence and its failure allowing sweeping out \cite{BJR_90_moving_avgs,Rosenblatt_Wierdl_ergodic_95}.
 Polynomial orbits add
an arithmetic, multifrequency layer that is invisible to this single frequency
setting.  This leads to the question motivating the paper: for which short
windows and starting times can a Bourgain-type pointwise polynomial ergodic
theorem still hold?

Our first result gives such a theorem on the doubly lacunary set
$\mathbb D_\lambda$ ,  introduced above, in the stronger form of a variational estimate.

\begin{theorem}\label{t:short-variation}
	Let $P\in\mathbb Z[t]$ have degree $d\geq2$, and let $\lambda \in \N$,
	$1<p<\infty$, $\rho>2$, and $c\in((d-1)/d,1)$.  Then
	\[
	\left\|\mathcal V^\rho
	\bigl(A_N^{P,c}f:N\in\mathbb D_\lambda\bigr)
	\right\|_{L^p(X)}
	\lesssim_{p,\rho,P,c,\lambda}\|f\|_{L^p(X)}.
	\]
	In particular, $A_N^{P,c}f$ converges almost everywhere as
	$N\to\infty$ through $\mathbb D_\lambda$.
\end{theorem}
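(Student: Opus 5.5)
By the Calderón transference principle, the estimate reduces to the integer shift system $(\mathbb Z,\tau)$. There it becomes a variational estimate for the convolution operators
\[
K_N * f(x) = \frac1{H_N}\sum_{h=1}^{H_N} f\bigl(x - P(N+h)\bigr),\qquad H_N=\floor{N^c},
\]
for $N\in\mathbb D_\lambda$. The Fourier multiplier is
\[
m_N(\xi)=\frac1{H_N}\sum_{h\le H_N} e\bigl(P(N+h)\xi\bigr).
\]
I will split $m_N$ into a major-arc part, modelled on the circle method, and a minor-arc error. The plan then has three steps.

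\textbf{Step 1: minor arcs.} We have $N+h\in[N,N+N^c]$. Write $P(N+h)=Q_N(h)$, a polynomial in $h$ whose leading coefficient is $a_d$ and whose lower coefficients are of size about $N^{d-j}$. Weyl/Vinogradov mean value bounds on windows of length $H=N^c$ give cancellation unless $\xi$ is close to a rational $a/q$ with $q\le N^{\delta}$, in the sense $|\xi-a/q|\lesssim N^{\delta}/(H N^{d-1})$. The hypothesis $c>(d-1)/d$ is exactly what makes $H\cdot HN^{d-1}$ exceed $N^d$. That is, the "effective scale" $HN^{d-1}$ of the linear coefficient dominates, and the window is long enough that the Weyl differencing does not lose to the size $N^d$ of the range of $P$. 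This yields
\[
|m_N(\xi)-\text{(major arc approx)}|\lesssim N^{-\kappa}
\]
for some $\kappa=\kappa(c,d)>0$, uniformly in $\xi$. On $\ell^2$ the error terms are summable over $N\in\mathbb D_\lambda$, since $N_k$ grows super-exponentially. On $\ell^p$ I interpolate against the trivial $\ell^1$ and $\ell^\infty$ bounds, where a power loss is fine. Because the square sum over $k$ is finite, the error contributes to $\mathcal V^\rho$ only through $\sum_k\|\cdot\|$.

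\textbf{Step 2: major-arc approximation.} Near $a/q$ I approximate
\[
m_N(\xi)\approx G(a/q)\,\Phi_N(\xi-a/q),\qquad G(a/q)=q^{-1}\sum_{r\le q}e(P(r)a/q).
\]
Here $\Phi_N$ is the continuous short-interval average $\Phi_N(\eta)=\frac1H\int_0^H e(P(N+t)\eta)\,dt$. It behaves like a smooth bump at scale $(HN^{d-1})^{-1}$ and oscillates with phase $P(N)\eta$. The phase $e(P(N)\eta)$ is a translation by $P(N)$, and this is the main new feature compared to Bourgain's setting. Crucially, double lacunarity makes the scales $(HN_k^{d-1})^{-1}$ extremely separated, so the translated pieces decouple.

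The major-arc operator is then handled as in Mirek--Stein--Trojan. I group rationals dyadically by denominator $q\sim 2^s$, and at each $s$ use the Ionescu--Wainger multiplier theorem. The Gauss sum bound $|G|\lesssim q^{-1/d+\epsilon}$ gives decay $2^{-s/d}$, which beats the logarithmic losses in $p$ from Ionescu--Wainger. This decay is converted to the $\ell^p$ setting by interpolating with an $\ell^2$ bound.

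For fixed $s$, the variation in $k$ splits into two ranges. In the small-scale range, with $N_k$ not large relative to $2^s$, there are only $O(\log s)$ scales because $N_k=\lambda^{\lambda^k}$. The variation there is bounded by a square function with $O(\log s)$ terms, which costs only a factor $\log s$. In the large-scale range, the $1/q$-periodic part factors out and one needs a $\mathcal V^\rho$ bound for the continuous moving averages $\Phi_{N_k}$ on $\mathbb R$, applied vector-valuedly over residue classes.

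\textbf{Step 3: continuous moving averages.} The operators $\Phi_{N_k}$ are averages of $f(x-P(N_k+t))$, $t\le H_k$, i.e. averages over intervals of length about $HN_k^{d-1}$ located at $P(N_k)$. These are moving averages $(P(N_k), P(N_k)+L_k]$ with $L_k\ll P(N_k)$. Bellow--Jones--Rosenblatt warns of sweeping out here. However, the lacunarity of $L_k$ (indeed double lacunarity) lets me bound the variation by a square function
\[
\Bigl(\sum_k|\Phi_{N_k}f-\Phi_{N_k}*\psi_k f|^2\Bigr)^{1/2}
\]
plus $\mathcal V^\rho$ of the standard lacunary smooth averages $\psi_k$, which is controlled by Jones--Seeger--Wright. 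I would try to bound the square function, whose kernels are differences of translated bumps, by vector-valued Calderón--Zygmund/Littlewood--Paley theory on $L^p$. Failing that, I would use a Rademacher–Menshov/Cotlar argument exploiting that $\log P(N_k)$ is itself lacunary, so the translations $P(N_k)$ relative to $L_k$ are well separated.

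I expect this step to be the main obstacle: the large translations $P(N_k)\gg L_k$ make the kernels non-Calderón–Zygmund. I would first attempt a maximal bound for the shifted averages via the Rosenblatt--Wierdl cone condition, which is satisfied along $\mathbb D_\lambda$ because the ratios $P(N_{k+1})/P(N_k)$ blow up. I would then upgrade to variation by interpolating the $\ell^2$ square-function bound with that maximal estimate. Almost-everywhere convergence follows from finiteness of $\mathcal V^\rho$ by the standard argument.
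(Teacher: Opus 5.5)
Your architecture matches the paper's. Both use transference, a short-window circle method with Weyl on the minor arcs, $\mathbf m_N\approx G(a/q)V_N(\cdot-a/q)$ on the major arcs, Ionescu--Wainger with Gauss-sum decay in the complexity, and a continuous variational estimate resting on double lacunarity.

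\textbf{Step~1 fails as written.} You bound the minor-arc error on $\ell^p$ by interpolating its $\ell^2$ gain against ``trivial'' $\ell^1$, $\ell^\infty$ bounds, saying a power loss is fine. It is not. The triangle-inequality $\ell^1$ bound for a major-arc model containing all fractions with $q\le N^{\delta}$ is of size about $N^{\delta}$ or larger. The $\ell^2$ gain, however, can be no better than about $N^{-\delta/d}$. At $\xi=a/q$ with $q$ just above $N^{\delta}$ one has $\mathbf m_N(\xi)=G(a/q)+O(q/H)$, and $|G(a/q)|=q^{-1/d}$ occurs (e.g.\ $P(t)=t^d$, $q=p^d$). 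The ratio of the two exponents does not depend on $\delta$, so Riesz--Thorin gives decay only for $p$ in a $d$-dependent neighbourhood of $2$. This is exactly the obstruction Ionescu--Wainger theory removes. The major-arc piece must be defined through Ionescu--Wainger projections, whose $\ell^p$ norms are $N^{o(1)}$, and that bound is what you interpolate with the $\ell^2$ gain; the paper does this via \cite[(6.11)]{KMSZ26}.

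You also misplace the role of $c>(d-1)/d$: the condition $H\cdot HN^{d-1}>N^d$ is equivalent to $c>1/2$. Weyl's inequality in $h\in[1,H]$ sees only the top coefficient $a_d\xi$. So it locates $\xi$ near $a/q$ with small $q$ only to precision about $H^{-d}=N^{-cd}$, not $N^{\delta}/(HN^{d-1})$. In between, smallness must come from $G(a/q)V_N(\xi-a/q)$. That approximation costs $|\xi-a/q|\,N^{d-1}q$, which stays small only if $N^{-cd}\ll N^{-(d-1)}$, i.e.\ $cd>d-1$. This is the term $N^{(d-1)(1+10\alpha)-cd}$ in Lemma~\ref{l:minorarcalldegree}.

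\textbf{Step~3 is not actually obstructed.} Your stated reason for the cone condition is insufficient, though. Growth of $P(N_{k+1})/P(N_k)$ is not enough: for $N_k=2^{k^2}$ the ratios blow up but the cone condition fails. What is needed is that $\log N_k$ is lacunary, so that for each $t$ only $O(1)$ indices satisfy $N_k^{d-1+c}\lesssim t\lesssim N_k^{d}$.

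This count makes the $\ell^2$-valued kernel $(\Phi_{N_k}^\vee-\psi_k)_k$ of your square function a H\"ormander kernel:
\begin{itemize}
\item scales with $N_k^{d-1+c}\gg|h|$ contribute $\lesssim|h|/N_k^{d-1+c}$, by the $L^1$ modulus of continuity;
\item scales with $N_k^{d}\ll|h|$ contribute only tails on $\{|x|\ge2|h|\}$;
\item the $O(1)$ remaining scales are handled by the triangle inequality.
\end{itemize}
The $L^2$ bound follows from \eqref{eq:vn} and the same count. So your vector-valued Calder\'on--Zygmund route closes. It is essentially the paper's verification of the H\"ormander condition for $K_\omega$, used there to bound $A_t$.

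For the variational input $B_p$, the paper instead proves a weak-$(1,1)$ maximal inequality by a Whitney-decomposition argument resting on the same count (Lemma~\ref{l:maxfunctionvariant}). It then passes to $\mathcal V^\rho$ by adapting \cite[Remark~3.134]{KrauseBook}. Your fallback of ``interpolating the $\ell^2$ square-function bound with the maximal estimate'' would have to be made precise along such lines, since one cannot interpolate between two different operators.

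Finally, the paper packages the Ionescu--Wainger step as \cite[Theorem~5.2]{KMSZ26}, applied only to scales with $l_{(N)}\geq l$, so no small-scale range arises. Your Mirek--Stein--Trojan split is also fine, since its losses are polynomial in $s$.
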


The assumption $\rho>2$ is sharp in the variational parameter: a uniform estimate
 cannot hold for $\rho<2$; one may compare the sharp endpoint obstruction even
for the long averages with polynomial orbits in \cite{Krause2014}.  The condition $p>1$ is also sharp, in a stronger sense.  Even arbitrary further
sparsification of the starting times does not allow an almost everywhere pointwise convergence theorem at $L^1$ for
monomial orbits.

\begin{theorem}\label{t:short-l1-bad}
	Let $d\geq2$, $P(n)=n^d$ , $0<c<1$, and let
	$\mathcal N\subseteq\mathbb N$ be any infinite set.  The subsequence $\{P(n)\}$ 
    %family
	%$\{A_N^{P,c}:N\in\mathcal N\}$ 
    is universally $L^1$-bad: on every 
	aperiodic ergodic system there is an $f\in L^1(X)$ for which
	$A_N^{P,c}f$ fails to converge almost everywhere  as 
	$N\to\infty$ through $\mathcal N$.
\end{theorem}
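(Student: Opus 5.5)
We follow the Buczolich--Mauldin/LaVictoire approach: reduce universal $L^1$-badness to a combinatorial failure of a weak-type $(1,1)$ maximal inequality on $\mathbb Z$, and then transfer it. By the Sawyer--Stein principle, in the form used by Bourgain, Buczolich--Mauldin and LaVictoire, together with Conze's/Rosenblatt's transference to arbitrary aperiodic ergodic systems, it suffices to show the following. For every infinite $\mathcal N$ and every $C>0$ there are $N$-tuples drawn from $\mathcal N$ and a finite set $F\subset\mathbb Z$ such that
\[
\Bigl|\Bigl\{x:\sup_{N\in\mathcal N}\tfrac{1}{\floor{N^c}}\textstyle\sum_{h\le \floor{N^c}}\ind_F(x+(N+h)^d)>0\Bigr\}\Bigr|\ge C|F|.
\]
In fact we aim for the stronger ``$\delta$-dense'' version: the set where the sup exceeds a fixed $\delta>0$ should have size at least $C|F|$. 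That version yields an $f\in L^1$ with divergence a.e., via the usual Borel--Cantelli gluing over disjoint Rokhlin towers.

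To build $F$ we use LaVictoire's construction, which works along every subsequence of $n^d$. The key arithmetic input is that the values $(N+h)^d$, $1\le h\le\floor{N^c}$, are ``independent'' modulo suitable auxiliary moduli. Concretely, we choose $N_1<N_2<\dots<N_K$ in $\mathcal N$ extremely sparse, each much larger than any power of the previous one. We take $F$ to be a union of residue-class-like sets $F=\{y\in[0,M): y\equiv r_j \bmod q_j \text{ for some } j\}$, with primes $q_j$ chosen so that $\{(N_j+h)^d \bmod q_j\}_{h\le N_j^c}$ equidistributes among $d$-th power residues. The set of $x$ where the $N_j$-average of $\ind_F$ is $\ge\delta$ then has density comparable to the proportion of residues hit by translates of $d$-th powers. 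The Buczolich--Mauldin ``many disjoint translates'' trick makes these sets for different $j$ nearly independent, so their union has density close to $1$, while $|F|/M$ is small, about $\sum_j 1/q_j\cdot(\text{const})$ or even smaller. Short intervals cost nothing here: since $N_j^c\to\infty$, Weil/Hua bounds for complete sums of $(N_j+h)^d$ mod $q_j$ give equidistribution once $q_j\ll N_j^{c/2}$. The starting point $N_j$ merely shifts the polynomial, which is why any $c\in(0,1)$ works.

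The main obstacle is the independence step. We must show that the events $\{x: A_{N_j}\ind_F(x)\ge\delta\}$ for distinct $j$ are approximately independent, so that the measure of their union is about $1-\prod_j(1-\theta_j)$ while $|F|$ grows only additively in a controlled, smaller way. We would first try LaVictoire's approach: use the Chinese remainder theorem over the pairwise coprime $q_j$, and choose the scales $N_j$ so that, modulo $\prod_{i\neq j}q_i$, the shifts $(N_j+h)^d$ are essentially constant across $h$ (take $N_j\equiv 0$ modulo the other $q_i$, possible after passing to a further subsequence of $\mathcal N$ by pigeonhole on residues). Then each average only ``sees'' its own modulus. Finally we would check that the loss from boundary effects in $[0,M)$ is negligible by taking $M\gg N_K^d$. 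With $C$ arbitrary, this gives failure of the weak $(1,1)$ bound, and by the transference principle we obtain universal $L^1$-badness along $\mathcal N$.
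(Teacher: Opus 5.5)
\textbf{Verdict: genuine gap.} Your reduction aims at an inequality that is either vacuous or false, and the residue-class construction cannot produce large averages.

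\textbf{The reduction target.} The condition $\sup_N A_N\mathbf 1_F(x)>0$ only says $x\in F-(N+h)^d$ for some admissible $N,h$. So $|\{\sup_N A_N\mathbf 1_F>0\}|\ge C|F|$ holds trivially (already for $F=\{0\}$) and implies nothing.

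The fixed-$\delta$ version is not a harmless strengthening: it is false in cases the theorem must cover. Take $\mathcal N=\mathbb D_\lambda$ and $c\in((d-1)/d,1)$. Proposition~\ref{p:varintlp}, together with the trivial $\ell^p$ bound for a single $A_N$, gives $\|\sup_{N\in\mathbb D_\lambda}|A_Nf|\|_{\ell^p(\Z)}\lesssim_p\|f\|_{\ell^p(\Z)}$ for every $p>1$. Hence $|\{\sup_N A_N\mathbf 1_F>\delta\}|\lesssim_p\delta^{-p}|F|$, with a constant independent of $F$. So no counterexample built from indicator functions at a fixed threshold can work for every infinite $\mathcal N$. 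The failure of weak $(1,1)$ must come from nonnegative $f$ of very uneven height, with the level $\lambda$ a large multiple of $\|f\|_1$.

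\textbf{The construction itself.} For a prime $q$, the congruence $u^d\equiv b\pmod q$ has at most $d$ solutions. Also, $h\mapsto N+h$ meets each residue at most once in any block of $q$ consecutive $h$. Hence for a residue class $F_j$ modulo $q_j$ and all $x,N$,
$A_N\mathbf 1_{F_j}(x)\le d\,(q_j^{-1}+\lfloor N^c\rfloor^{-1})$,
and therefore $A_{N_i}\mathbf 1_F\le d\sum_j q_j^{-1}+dK\lfloor N_i^c\rfloor^{-1}$. This is below $\delta$ as soon as $d\sum_jq_j^{-1}<\delta/2$ and $\lfloor N_i^c\rfloor>2dK/\delta$. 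That is exactly the regime where $|F|/M$ is small, so the level sets you need to be large are empty.

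The independence step also fails. Taking $N_j\equiv0\pmod{q_i}$ does not make $(N_j+h)^d$ essentially constant modulo $q_i$: it gives $(N_j+h)^d\equiv h^d$, which runs through all $d$-th power residues because the window length $N_j^c$ exceeds $q_i$.

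\textbf{What is missing.} The paper supplies this through LaVictoire's inductive construction. It builds nonnegative $f_1,\dots,f_K$ on a cyclic group $\mathbb Z_{\mathcal T}$ with $\EE f_h\le e^4$, and a stopping modulus $Q_x$ that varies with $x$. These satisfy residue domination,
$|\Lambda_{Q_x}|^{-1}\sum_{a\in\Lambda_{Q_x}}f_h(x+a)\ge X_h(x)$,
where the $X_h\stackrel{d}{=}Y_{M,\gamma,\alpha}$ are pairwise independent, with mean $\gtrsim B\alpha^2$ and second moment $\le 2e^{2B}$. They are produced using Granville--Kurlberg spacing of $\Lambda_q$. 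The $f_h$ are also locally $Q_x$-periodic up to a control range $\Psi(Q_x)\ge(N(Q_x)+\lfloor N(Q_x)^c\rfloor)^d$.

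One then picks $N(Q)\in\mathcal N$ with $\lfloor N(Q)^c\rfloor\ge 4Q$, which uses only that $\mathcal N$ is unbounded. The short-block residue count then gives $A_{N(Q_x)}f_h(x)\ge X_h(x)/4$. Chebyshev applied to $\sum_hX_h$ yields $\lambda\,|\{\sup_N A_Nf>\lambda\}|\gtrsim B\alpha^2\|f\|_1$ with $B$ arbitrary, and Sawyer--Conze transference finishes the proof.
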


In particular, no $L^1$ maximal or variational estimate corresponding to
Theorem~\ref{t:short-variation} can hold.

A natural next question is how sparse the sampling times must be in order to retain almost-everywhere convergence on $L^p, p>1$. In light of the criterion of Rosenblatt and Wierdl for ordinary moving averages, it is natural to ask whether an analogous condition is sufficient in the present polynomial setting \cite{Rosenblatt_Wierdl_1992}.

\begin{theorem}\label{t:dense-sweeping}
	Let $(j_n)$ be a strictly increasing sequence of positive integers such that
	$j_n\to\infty$ and
	\[
	\frac{\log j_{n+1}}{\log j_n}\longrightarrow1.
	\]
	For every $P\in\mathbb Z[t]$ with $\deg P\geq2$ and every $0<c<1$,
	the family $\{A_{j_n}^{P,c}\}$ has the strong sweeping-out
	property for every aperiodic transformation of a non-atomic standard
	probability space.  Thus, for every
	$\delta>0$, there is a set $E$ with $0<\mu(E)<\delta$ such that
	\[
	\limsup_{n\to\infty}A_{j_n}^{P,c}\mathbf 1_E=1,
	\qquad
	\liminf_{n\to\infty}A_{j_n}^{P,c}\mathbf 1_E=0
	\]
	almost everywhere.
\end{theorem}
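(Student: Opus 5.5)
We prove Theorem~\ref{t:dense-sweeping} with the Rosenblatt--Wierdl / Bellow--Jones--Rosenblatt machinery for moving averages, transferred to polynomial orbits. By the Conze--del Junco type reduction (the strong sweeping-out criterion of Bellow--Jones--Rosenblatt), it suffices to show the following. For every aperiodic system, every $\eta>0$ and every $M$, there are a set $E$ with $\mu(E)<\eta$ and indices $n_1<\dots<n_r$ with $n_1\geq M$ such that
\[
\mu\Bigl(\max_{i\le r}A_{j_{n_i}}^{P,c}\mathbf 1_E>1-\eta\Bigr)>1-\eta .
\]
A standard diagonal gluing of such sets, with summable measures, then gives $\limsup=1$. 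The complementary construction applied to $X\setminus E$ within the same scheme gives $\liminf=0$ almost everywhere. By Rokhlin's lemma and aperiodicity, this reduces to a combinatorial statement on $\mathbb Z$: for large $L$, find $F\subset[-L,L]$ of relative density $<\eta$ such that for all but an $\eta$-proportion of $x\in[-L/2,L/2]$ some chosen window satisfies
\[
|\{h\le \floor{j^c}: x+P(j+h)\in F\}|>(1-\eta)\floor{j^c}.
\]
The transfer from $\mathbb Z$ to $X$ is routine: take a Rokhlin tower of height far exceeding $P(j_{n_r}+j_{n_r}^c)$ and copy $F$ into its levels.

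The key geometric input is this. On the window $[j,j+j^c]$ the orbit $P(j+h)$ is, to first order, an arithmetic progression with gap $\approx P'(j)\asymp j^{d-1}$. More precisely, the image set $S_j=\{P(j+h):h\le j^c\}$ lies in an interval $I_j$ of length $\asymp j^{d-1+c}$ and is very sparse inside it. We choose $F$ as a union of translates adapted to the $S_{j_{n_i}}$. Point $x$ is ``swept'' at scale $i$ if $x+S_{j_{n_i}}\subset F$ up to an $\eta$ fraction.

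The hypothesis $\log j_{n+1}/\log j_n\to1$ is used to pick many indices $n_1<\dots<n_r$ whose starting times are not separated enough for the windows to be independent. Concretely, we can pick $j_{n_i}$ with
\[
j_{n_{i+1}}\in\bigl[j_{n_i}^{1+\epsilon},\,j_{n_i}^{1+2\epsilon}\bigr]
\]
for a tiny $\epsilon$. Along these, the supports $x+S_{j_{n_i}}$ for different $i$ behave like nearly independent random samples, since their spacings $j_{n_i}^{d-1}$ are at incommensurable polynomial scales. This is the analogue of the Rosenblatt--Wierdl condition, where the counting of ``$(N,L_N)$'' pairs in the Bellow--Jones--Rosenblatt cone condition is violated.

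The construction of $F$ follows the Bourgain entropy / Rosenblatt ``spreading'' strategy. Let $F_i=-S_{j_{n_i}}+B_i$, where $B_i$ is a sparse random set of translates of density $\asymp\eta/r$. Put $F=\bigcup_i F_i$, so $|F|/L\lesssim\eta$ because $|S_j|=j^c$ while $S_j$ has diameter $j^{d-1+c}$. A point $x$ is swept at scale $i$ when $x\in B_i$ up to small perturbations. To cover all $x$ we exploit the fact that each scale's randomness is independent. Then $\mathbb P(x \text{ unswept})\approx\prod_i(1-\theta)$ with $\theta$ a fixed small proportion, and this tends to $0$ as $r\to\infty$. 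The requirement is that we can take $r$ arbitrarily large, and that is exactly what $\log j_{n+1}/\log j_n\to1$ supplies: infinitely many scales inside any multiplicative range $[J,J^{K}]$.

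The main obstacle is controlling the \emph{cross terms}. We must show that $x+S_{j_{n_i}}$ does not hit $F_{i'}$ for $i'\neq i$ too often. Otherwise the mass of $F$ would be wasted, or unwanted correlations would ruin independence. Equivalently, we need the additive energy or correlation of $S_{j}$ with $S_{j'}$ to be small. I would handle this by Weyl-sum equidistribution: $\#\{(h,h'):P(j+h)-P(j'+h')\in B\}$ is controlled through the Fourier transform of $\mathbf 1_{S_j}$, which is a short-interval Weyl sum. Since no lower bound on $c$ is assumed here, I would use only the crude fact that $P(j+h)$, $h\le j^c$, has distinct values with gaps $\gtrsim j^{d-1}$, together with the random choice of $B_i$. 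Averaging over the randomness, the expected cross-hit count is then $O(\eta)$ by a pure counting argument that avoids exponential sums. This counting step is where the details concentrate.
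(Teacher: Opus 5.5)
There is a genuine gap, and it lies in the construction of $F$ itself, not in the cross terms.

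First, the density bound is wrong. For a random $B_i$ of density $\asymp\eta/r$, the translates $b+S_j$, $b\in B_i$, are essentially disjoint, so $B_i+S_j$ has density $\asymp\min\{1,j^c\eta/r\}$. That $S_j$ is sparse inside its convex hull says nothing about the density of the sumset in $[-L,L]$. If you shrink $B_i$ to density $\eta/(rj^c)$ to repair this, the set swept at scale $i$ (essentially $B_i$) has density $\eta/(rj^c)$, and $\theta$ is no longer a fixed proportion.

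More fundamentally, no design in which each scale is served by its own piece can work. On $\mathbb Z_T$ one has $\EE_x A_j\mathbf 1_G=\mathbb P(G)$ for every set $G$ and every $j$. So by Markov the set $\Sigma_i=\{A_{j_{n_i}}\mathbf 1_{F_i}>1-\eta\}$ of points swept at scale $i$ by $F_i$ alone satisfies $\mathbb P(\Sigma_i)\le\mathbb P(F_i)/(1-\eta)$. Hence
\[
\mathbb P\Bigl(\bigcup_{i\le r}\Sigma_i\Bigr)\le\frac{1}{1-\eta}\sum_{i\le r}\mathbb P(F_i)\lesssim\eta ,
\]
not $1-\eta$. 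Put differently, making $\prod_i(1-\theta_i)$ small forces $\sum_i\theta_i\gtrsim1$, while $\theta_i\lesssim\mathbb P(F_i)$ and your budget gives $\sum_i\mathbb P(F_i)\lesssim\eta$. A working $F$ must have measure $\approx\eta$, be almost entirely used at each of $r\gg1/\eta$ scales, and have nearly independent swept sets. The interaction between scales that you try to suppress is exactly the mechanism, and independent random translates cannot produce it.

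What is missing is the observation the paper's proof rests on: at suitable frequencies the short average is just a translation. One picks $N$ scales with $j_{k_i}^{\tau_2}\le j_{k_{i+1}}\le j_{k_i}^{\tau_1\tau_2}$ and $(\tau_1\tau_2)^N<d/(d-1+c)$. This is where $\log j_{n+1}/\log j_n\to1$ is used: all $N$ scales must fit inside $[J,J^{d/(d-1+c)}]$, a threshold your choice $j_{n_{i+1}}\in[j_{n_i}^{1+\epsilon},j_{n_i}^{1+2\epsilon}]$ never sees. Then for $|\xi|\lesssim Nj_{k_1}^{-d}$ one has $m_{j_{k_i}}(\xi)=e(a_dj_{k_i}^d\xi)+O(j_{k_i}^{d-1+c}|\xi|)$ with uniformly small error. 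So the family acts like translations by $a_dj_{k_i}^d$, which are lacunary in $i$. The recursively defined $\xi_i=\sum_aA_i^aj_{k_a}^{-d}$ make $e(a_dj_{k_i}^d\xi_m)$ close to $1$ when $m=i$ and close to $e(a_d/2^\zeta)$ when $m\neq i$. The Fourier criterion \cite[Theorem~1.14]{SSOP_96}, together with dissipativity, then gives strong sweeping out, including the $\liminf$ part.

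In physical space, the relevant feature of the window $\{P(j+h)\}$ is that its diameter $\asymp j^{d-1+c}$ is negligible compared with its offset $P(j)\asymp j^d$. Good sets are therefore built from long intervals containing whole windows. The gap-$j^{d-1}$ fine structure you build on, and the cross-term counting, play no role. Note also that the paper remarks $j_n=n!$ satisfies the Rosenblatt--Wierdl convergence condition for linear moving averages, so the obstruction is not a violated cone condition but the polynomial phase.

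Finally, your claim that rerunning the scheme on $X\setminus E$ yields $\liminf=0$ does not make sense, since $X\setminus E$ has large measure. That step needs a genuine argument, though it is not where the proof breaks.
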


After discarding finitely many terms, the hypothesis includes
$j_n=2^{q(n)}$ for every nonconstant
$q\in\mathbb Z[t]$ with positive leading coefficient, as well as
$j_n=n!$.  Consequently these denser time sets fail even for bounded
functions. The factorial example highlights a genuine discrepancy with the
Rosenblatt--Wierdl moving-interval theory: the sequence \(n!\) satisfies their convergence
condition, but Theorem~\ref{t:dense-sweeping} shows that polynomial orbits along
these starting times still sweep out. The reason is the multifrequency obstruction created by the
polynomial phase; see Proposition~\ref{p:sweepingout}.

Several natural questions remain.  First, we are cautiously optimistic that the restriction
$c>(d-1)/d$ in Theorem~\ref{t:short-variation} is sharp.  Second, one may ask for bilinear and,
more generally, multilinear analogues of the short-interval pointwise and
variational theorem, in the spirit of the recent full-interval polynomial
theory \cite{KMT_pw_erg_bilinear_22,KMTT_bil_poly_prime_25,kosz2025multilinearcirclemethodquestion}.  Third, it would be
interesting to establish a polynomial Wiener--Wintner theorem along these short
windows, with an exceptional set independent of the polynomial phase \cite{Lesigne1993,Frantzikinakis2006}.

We conclude the introduction with an outline of the arguments.  For
Theorem~\ref{t:short-variation}, transference reduces matters to
$\ell^p(\mathbb Z)$, where we perform a short-interval variant of the
Hardy--Littlewood circle method. We then decompose according to
the complexity of the canonical fractions and apply
\cite[Theorem~5.2]{KMSZ26}, the Ionescu--Wainger theorem for canonical
fractions and Gauss sum decay makes the complexity sum convergent.  The condition
$c>(d-1)/d$ enters precisely in the short exponential sum estimates concerning the minor arc components.

Next, Sawyer's weak-type principle together with Conze transference reduces
Theorem~\ref{t:short-l1-bad} to
the failure of a uniform weak-$(1,1)$ estimate on finite cyclic groups
\cite{Sawyer1966,BM,L9}.  We recycle LaVictoire's family lemma, modifying only the range on which local periodicity is required so that it covers all polynomial values arising from the short interval. This allows the short-interval averages to be reduced to the same finite arithmetic model used in LaVictoire's construction, while the rest of the argument remains essentially unchanged. The new
arithmetic ingredient is a reduction connecting the cyclic model to the actual
short-interval maximal operator.  

Finally, for Theorem~\ref{t:dense-sweeping}, we apply the sweeping-out criterion \cite[Theorem~1.14]{SSOP_96}. The key point is to choose scales $j_{k_1}<\cdots<j_{k_N}$ and frequencies $\xi_1,\ldots,\xi_N$ such that the multiplier at $\xi_i$ is close to $1$ at the scale $j_{k_i}$, but close to a fixed nontrivial phase at all the other selected scales. The assumption
\[
\frac{\log j_{n+1}}{\log j_n}\to1
\]
allows us to choose the scales with enough separation while keeping their relative growth under control. At these scales,\[
m_{j_k}(\xi)
=e(a_dj_k^d\xi)
+O(j_k^{d-1+c}|\xi|),
\]
and we construct the frequencies recursively on $\mathbb T$ so that the leading phases have exactly the required diagonal/off-diagonal behavior. This verifies the criterion of \cite{SSOP_96} and yields strong sweeping out.

Section~2 fixes the notation, variational norms, canonical fractions and
Ionescu--Wainger projections.  Section~3 proves the variational estimate by the
circle method.  Section~4 establishes the universal $L^1$ counterexample in
the finite cyclic model and transfers it to ergodic systems.  Section~5 proves
strong sweeping out for a class of denser sequences of sampling times.
	\label{s:methodsandstructure
}

\subsection{Acknowledgment}  
We would like to thank Máté Wierdl for constructive conversations and for
bringing the problem to our attention.

\section{Notation}

We collect the notation and conventions used throughout the paper.

\subsection{General Conventions}

For a positive integer $H$, let
\[
[H]:=\{1,\ldots,H\}.
\]
Whenever a sum or an expectation is taken over integers in an interval $I\subseteq\R$, we write $n\in I$ in place of $n\in I\cap\Z$. For every finite nonempty set $I\subseteq\Z$, we set
\[
\mathop{\EE}\limits_{n\in I} f(n)
:=\frac{1}{|I|}\sum_{n\in I}f(n),
\qquad
\EE_H f:=\mathop{\EE}\limits_{h\in[H]}f(h).
\]

If $A$ and $B$ are nonnegative quantities, then $A\lesssim B$ means that $A\leq CB$ for some constant $C>0$, and $A\gtrsim B$ means that $B\lesssim A$. We write $A\lesssim_{\vartheta}B$ when the constant is allowed to depend on a parameter $\vartheta$, and write $A\simeq B$ when both $A\lesssim B$ and $B\lesssim A$ hold. We use $A\sim B$ synonymously with $A\simeq B$. Unless indicated otherwise, implicit constants may depend on parameters fixed in the surrounding argument, but not on the variables being estimated.

We set
\[
e(x):=e^{2\pi i x},
\qquad
\langle x\rangle \coloneqq \max \left\{1,|x| \right\},
\qquad
\Tr_h f(x):=f(x-h).
\]
We identify the torus $\TT:=\R/\Z$ with $[-1/2,1/2)$ and use normalized Lebesgue measure on $\TT$. Differences of elements of $\TT$ are always represented in $[-1/2,1/2)$.

For $t>0$, we write
\[
{\rm Log\,}t:=\floor{\log_2 t}.
\]
Recall that, for $\lambda>1$,
\[
N_k=N_k(\lambda):=\lambda^{\lambda^k}
\qquad
\mathbb D_\lambda:=\{N_k:k\in\N\}.
\]

\subsection{Fourier transforms}

For a finitely supported function $f\colon\Z\to\CC$, define
\[
\F_{\Z}f(\xi)=\widehat f(\xi)
:=\sum_{n\in\Z}f(n)e(-n\xi),
\qquad \xi\in\TT.
\]
For $g\in L^1(\TT)$, define
\[
\F_{\Z}^{-1}g(x)=g^{\vee}(x)
:=\int_{\TT}g(\xi)e(x\xi)\,\d\xi,
\qquad x\in\Z.
\]
Thus $(\widehat f)^{\vee}=f$ for every finitely supported $f\colon\Z\to\CC$.

For $f\in L^1(\R)$, define
\[
\F_{\R}f(\xi)=\widehat f(\xi)
:=\int_{\R}f(x)e(-x\xi)\,\d x,
\qquad \xi\in\R,
\]
and, for $g\in L^1(\R)$, define
\[
\F_{\R}^{-1}g(x)=g^{\vee}(x)
:=\int_{\R}g(\xi)e(x\xi)\,\d\xi,
\qquad x\in\R.
\]

For $X=\Z$, let $\widehat X=\TT$, and for $X=\R$, let $\widehat X=\R$. Given a bounded symbol $m$ on $\widehat X$, we define the Fourier multiplier operator $T_X[m]$ by
\[
\F_X\bigl(T_X[m]f\bigr)(\xi)
:=m(\xi)\F_Xf(\xi),
\qquad \xi\in\widehat X,
\]
initially for finitely supported functions when $X=\Z$ and for Schwartz functions when $X=\R$.

\subsection{Variational seminorms}

Let $\mathbb I\subseteq\N$ be ordered by the usual order, and let $\{f_K:K\in\mathbb I\}$ be a collection of measurable functions on a measure space $X$. For $1\leq\rho<\infty$, define the pointwise $\rho$-variation seminorm by
\[
\mathcal V^{\rho}(f_K:K\in\mathbb I)(x)
:=\sup_{\substack{J\geq1\\K_0<\cdots<K_J\\K_j\in\mathbb I}}
\left(
\sum_{j=0}^{J-1}
\left|f_{K_{j+1}}(x)-f_{K_j}(x)\right|^{\rho}
\right)^{1/\rho}.
\]
For $\rho=\infty$, we use the usual modification
\[
\mathcal V^{\infty}(f_K:K\in\mathbb I)(x)
:=\sup_{\substack{J\geq1\\K_0<\cdots<K_J\\K_j\in\mathbb I}}
\max_{0\leq j<J}
\left|f_{K_{j+1}}(x)-f_{K_j}(x)\right|.
\]

\subsection{Ionescu-Wainger projections}

Fix a smooth even function $\eta\colon\R\to[0,1]$ satisfying
\[
\ind_{[-1/4,1/4]}\leq\eta\leq\ind_{(-1/2,1/2)},
\]
and, for $m\in\Z$, write
\[
\eta_{\leq m}(\xi):=\eta(2^{-m}\xi).
\]

Following the notation of \cite{KMSZ26}, for every integer $l\geq1$ define the set of canonical fractions by
\begin{equation}\label{eq canonical fracs}
\Sigma_{\leq l}
:=\left\{
\frac{a}{q}\in\mathbb Q/\Z:
1\leq q\leq 2^l,\ (a,q)=1
\right\}.
\end{equation}
We set $\Sigma_{\leq0}:=\varnothing$ and
\[
\Sigma_l:=\Sigma_{\leq l}\setminus\Sigma_{\leq l-1}.
\]
For integers $l\geq1$ and $m\in\Z$, define the dyadic major arcs by
\[
\mathcal M_{\leq l,\leq m}
:=\bigcup_{\theta\in\Sigma_{\leq l}}
\left\{\xi\in\TT:|\xi-\theta|\leq2^m\right\}.
\]
If $m\leq-2l-2$, then the arcs in this union are pairwise disjoint.

Let $\Sigma\subseteq\mathbb Q/\Z$ be finite, let $G\colon\Sigma\to\CC$, and let $\mathfrak m$ be a bounded function on $\R$. For $X\in\{\Z,\R\}$, we write
\[
T_X\llbracket G,\mathfrak m\rrbracket_{\Sigma}
:=T_X\left[\sum_{\theta\in\Sigma}
G(\theta)\mathfrak m(\,\cdot-\theta)\right].
\]
Equivalently,
\begin{align*}
\F_{\Z}\left(T_{\Z}\llbracket G,\mathfrak m\rrbracket_{\Sigma}f\right)(\xi)
&=\sum_{\theta\in\Sigma}G(\theta)\mathfrak m(\xi-\theta)\F_{\Z}f(\xi),\\
\F_{\R}\left(T_{\R}\llbracket G,\mathfrak m\rrbracket_{\Sigma}f\right)(\xi)
&=\sum_{\theta\in\Sigma}G(\theta)\mathfrak m(\xi-\theta)\F_{\R}f(\xi).
\end{align*}
Here and below, elements of $\mathbb Q/\Z$ are represented in $[-1/2,1/2)$ when they occur in expressions on $\R$.

The dyadic Ionescu--Wainger projections are defined by
\[
\F_{\Z}\left(\Pi_{\leq l,\leq m}f\right)(\xi)
:=\sum_{\theta\in\Sigma_{\leq l}}
\eta\left(2^{-m}(\xi-\theta)\right)\widehat f(\xi),
\]
and, at a fixed complexity scale, by
\[
\F_{\Z}\left(\Pi_{l,\leq m}f\right)(\xi)
:=\sum_{\theta\in\Sigma_l}
\eta\left(2^{-m}(\xi-\theta)\right)\widehat f(\xi).
\]
In particular,
\[
\supp\F_{\Z}\left(\Pi_{\leq l,\leq m}f\right)
\subseteq\mathcal M_{\leq l,\leq m}.
\]

For a function $G\colon\mathbb Q/\Z\to\CC$, write $G_q$ for its restriction to $\Z/q \Z $  and define
\[
U_G
:=\sup_{q\geq1}
\norm{\F^{-1}_{\Z/q\Z}G_q}_{L^1(\Z/q\Z)},
\]
where the supremum runs over integers $q\geq1$. If $U_G<\infty$ and $\Sigma\subseteq\mathbb Q/\Z$ is finite, set
\begin{equation}\label{eq CG}
C_G(\Sigma):=U_G\norm{G}_{\ell^{\infty}(\Sigma)}.
\end{equation}

In this paper, $G$ denotes the polynomial Gauss sum
\[
G\left(\frac{a}{q}\right)
:=\frac{1}{q}\sum_{r\in[q]}
e\left(-\frac{a}{q}P(r)\right),
\]
for $a\in\Z$ and integers $q\geq1$, where $P\in\Z[t]$ has degree $d\geq2$. This defines a function on $\mathbb Q/\Z$, independently of the chosen representation of the rational class.

	\section{Sharp Variational $L^p$ bounds}
    In this section we will give a short proof of the variational $L^p$ estimates of our main result. As our main result concerns times in the set $ \mathbb{D}_{\lambda}$ whenever we write $N$ we mean that $N \in \mathbb{D}_{\lambda}$ for a fixed $\lambda>1$ and therefore the dependence of the implied constants on the parameter $\lambda$ will not be tracked.
	
	The objective of this section is to prove the following proposition. \begin{proposition} \label{p:varintlp} Whenever $\rho>2$ we have that 
		\[ \left \| \mathcal{V}^\rho(A_N^{P,c}f: N \in \mathbb{D}_{\lambda}) \right \|_{\ell^p(\Z)} \lesssim \left \| f \right \|_{\ell^p(\Z)}. \]
	\end{proposition}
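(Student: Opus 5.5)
We work on $\ell^p(\Z)$ with $A_Nf(x)=\EE_{h\in[H_N]}f(x-P(N+h))$, where $H_N=\floor{N^c}$. Its multiplier is
\[
m_N(\xi)=\EE_{h\in[H_N]}e(-P(N+h)\xi).
\]
The plan is a circle-method decomposition of $m_N$ into a major-arc part, handled by the Ionescu--Wainger machinery of \cite[Theorem~5.2]{KMSZ26}, and a minor-arc error. Because $\mathbb D_\lambda$ is so sparse, the error will be summable over $N$. Concretely, for $N=N_k$ we fix a complexity cutoff $l_N\simeq \delta\log N$ and scale $2^{m}\simeq N^{\eps}/(N^{d-1}H_N)$. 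Put $M_N:=N^{d-1}H_N$, which is the derivative scale of $h\mapsto P(N+h)$ on $[H_N]$. We write
\[
m_N(\xi)=\sum_{\theta\in\Sigma_{\leq l_N}}G(\theta)\,\Phi_N(\xi-\theta)\,\eta_{\leq m}(\xi-\theta)+E_N(\xi),
\]
with
\[
\Phi_N(\zeta)=\int_0^1 e\bigl(-P(N+H_Nt)\zeta\bigr)\,\d t .
\]
The first summand comes from splitting $h$ into residues mod $q$, where $h\mapsto P(N+h)\bmod q$ is equidistributed over complete periods. The integral is the continuous analogue of the short sum; it behaves like a smoothed average at scale $M_N^{-1}$ because $P(N+H_Nt)-P(N)\approx P'(N)H_Nt$.

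\textbf{Step 1: the error term.} We aim for $\|E_N\|_{L^\infty(\TT)}\lesssim N^{-\gamma}$ for some $\gamma>0$. Then $\sum_{N\in\mathbb D_\lambda}\|T[E_N]\|_{\ell^2\to\ell^2}<\infty$, and interpolating with the trivial $\ell^1$ and $\ell^\infty$ bounds gives summable $\ell^p$ bounds for each $1<p<\infty$. Since $\mathcal V^\rho\leq 2(\sum_N|\cdot|^\rho)^{1/\rho}\le 2\sum_N|\cdot|$, this disposes of the error.

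On the major arcs the $L^\infty$ bound is a standard summation-by-parts/Taylor comparison. The error is of size $q\,M_N|\zeta|/H_N+2^{-l_N/d}$, coming from incomplete residue blocks and from $\eta_{\leq m}$ cutting off $G(\theta)$, with $|G(a/q)|\lesssim q^{-1/d}$.

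The main obstacle is the minor arcs, that is, $\xi$ not within $N^{\eps}/M_N$ of any $a/q$ with $q\le N^{\delta}$. There we need a Weyl-type bound for the short sum $\sum_{h\le H}e(P(N+h)\xi)$ with $H=N^c$. Expanding in $h$, the phase is a degree-$d$ polynomial in $h$ whose top coefficient is $a_d\xi$ and whose linear coefficient is $\approx P'(N)\xi$. Weyl differencing $d-1$ times, or applying Vinogradov mean-value bounds, reduces matters to a linear sum with frequency $\approx d!\,a_d\xi\, h_1\cdots h_{d-1}$ plus lower terms involving $N$. I would instead difference only until the derivative $\sim N^{d-1}H\,\xi$ dominates, then use Dirichlet approximation of $\xi$ at level $Q\sim H N^{d-1}/N^{\eps}$. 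The requirement that the major arcs of width $N^\eps/M_N$ around fractions with $q\le N^\delta$ cover everything not giving a power saving is where $H^d\gtrsim N^{d-1}$, i.e.\ $c>(d-1)/d$, should enter. Heuristically, the curvature scale $H^{d}|\xi|$ must beat the linear scale so that nonstationary phase gives decay. I expect this step to require the most care.

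\textbf{Step 2: the main term.} We split by complexity, writing the main term as $\sum_{l\le l_N}$ of terms $T[\sum_{\theta\in\Sigma_l}G(\theta)\Phi_N(\cdot-\theta)\eta_{\le m}(\cdot-\theta)]$. For each fixed $l$, \cite[Theorem~5.2]{KMSZ26} transfers $\rho$-variational bounds for the real-line multipliers $\Phi_N\eta$ to the periodized operator. The cost is $C_G(\Sigma_l)\lesssim_\epsilon 2^{\epsilon l}\,2^{-l/d}$, using the Gauss-sum decay together with $U_G\lesssim 1$. The real-line family $\{\Phi_N\}$ consists of averages along short polynomial pieces, essentially averages at scale $M_N$ composed with a smooth phase. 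Its $\rho$-variation for $\rho>2$ on $L^p(\R)$ follows from L\'epingle's inequality and square-function comparison, since the scales $M_{N_k}$ are lacunary. Scales $N$ with $l>l_N$ are absent, which only helps: those terms are handled by restricting the variation to $N$ with $l_N\ge l$. Summing $2^{-l(1/d-\epsilon)}$ over $l$ converges and completes the proof.
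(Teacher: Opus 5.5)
The gap is in Step~2, and it is exactly where the hypothesis $N\in\mathbb D_\lambda$ has to enter. Your real-line multiplier is not ``an average at scale $M_N$ composed with a smooth phase'' in any harmless sense. Since $P(N+H_Nt)=P(N)+O(M_N)$, you have $\Phi_N(\zeta)\approx e(-P(N)\zeta)\,\widehat{\chi}(M_N\zeta)$. So $T_{\R}[\Phi_N]$ is a moving average of length $M_N\simeq N^{d-1+c}$ translated by $P(N)\simeq N^d\gg M_N$.

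On the window $N^{-d}\lesssim|\zeta|\lesssim M_N^{-1}$ the symbol is a rotating unimodular phase, neither close to $1$ nor small. Any comparison with a dyadic martingale, at scale $M_N$ or at scale $N^d$, therefore leaves a square function whose symbol is typically comparable to $\#\{k: N_k^{-d}\lesssim|\zeta|\lesssim M_{N_k}^{-1}\}$. Lacunarity of $M_{N_k}$ does not bound this count: for $N_k=2^k$ it is $\simeq\log(1/|\zeta|)$.

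In fact your argument never distinguishes $\mathbb D_\lambda$ from $\{2^k\}$, since the errors $N^{-\gamma}$ are summable over both. Yet $j_n=2^n$ satisfies $\log j_{n+1}/\log j_n\to1$, so by Theorem~\ref{t:dense-sweeping} the averages sweep out along it. Correspondingly, the linearized moving averages, of length $\simeq 2^{k(d-1+c)}$ at offset $\simeq 2^{kd}$, fail the Bellow--Jones--Rosenblatt cone condition. Hence the real-line variational bound you derive from L\'epingle plus square functions is false for merely lacunary scales.

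What rescues the argument is double lacunarity. Because $\log N_k=\lambda^k\log\lambda$ grows geometrically and $d/(d-1+c)$ is a fixed ratio, $\#\{k:N_k^{-d}\lesssim|\zeta|\lesssim N_k^{-(d-1)-c}\}\lesssim1$ uniformly in $\zeta$. The paper uses this in three places:
\begin{enumerate}
\item a weak-$(1,1)$ bound for $\sup_k|f*V_{N_k}^{\vee}|$ via a Whitney decomposition (Lemma~\ref{l:maxfunctionvariant});
\item the bound $\sup_\zeta\sum_k|V_{N_{k+1}}\eta_{\leq -(d-1)L_{(N_{k+1})}}-V_{N_k}\eta_{\leq -(d-1)L_{(N_k)}}|\lesssim1$, obtained from the decomposition $V_N=\eta(N^d\cdot)+\psi(N^{d-1+c}\cdot)+O(\mathbf 1_{N^{-d}\lesssim|\zeta|\lesssim N^{-(d-1)-c}})+\mathsf{Err}_N$;
\item a H\"ormander condition uniform in the signs $\omega$.
\end{enumerate}
Together these verify both real-line inputs fed into \cite[Theorem~5.2]{KMSZ26}: the variational bound $B_p$ and the randomized-difference bound $A_{2r}$. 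Your sketch does not address $A_{2r}$ at all.

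A secondary problem is your minor-arc mechanism. Dirichlet approximation at level $HN^{d-1}N^{-\epsilon}$ is too coarse for Weyl's inequality on a sum of length $H=N^c$. Weyl needs $q\le H^dN^{-\epsilon}$, while $N^{d-1+c}>N^{cd}$ for $c<1$. The right level is $\approx N^{d-1+\epsilon'}$, and this is where $cd>d-1$ genuinely enters. You must then treat separately the region between your arcs of width $N^{\epsilon}/M_N$ and distance $N^{-(d-1)-\epsilon'}$ from low-height fractions. There, use the major-arc approximation, whose error $qN^{d-1}|\zeta|$ is still small, together with $|\Phi_N(\zeta)|\lesssim(M_N|\zeta|)^{-1}$. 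The paper avoids this intermediate region by taking arcs of width $2^{-(d-1)L_{(N)}}\approx N^{-(d-1)(1+10\alpha)}$ from the start.
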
 As outlined in the introduction, we will start by carrying out the Hardy-Littlewood circle method for the averages \[A_{N}^{P,c}f=\EE_{n \in [N,N+N^c]}f(x-P(n))= \int_{\TT} \widehat{f}(\xi) \mathbf{m}_N^{P,c}(\xi)e(\xi x) \d \xi, \text{ where }\mathbf{m}_{N}^{P,c}(\xi) \coloneqq \EE_{n \in [N,N+N^c] }e(-\xi P(n)).   \] 
	Before proceeding with the proof, we fix the data
	\[
	\lambda>1, \quad \rho>2,\quad
	p\in(1,\infty),\quad
	P(x)=a_d x^d+a_{d-1}x^{d-1}+\cdots+a_0\in\mathbb{Z}[x],\quad
	c\in\left(\frac{d-1}{d},1\right),
	\]
	and  $r \in \N $ sufficiently large so that
	\[
	(2r)' \leq p \leq 2r.
	\]
	These quantities will remain fixed throughout the proof and, whenever there is no risk of ambiguity, will be suppressed from the notation. It will also be beneficial for us in order to set up the stage for the application of \cite[Theorem 5.2]{KMSZ26} to introduce the auxiliary parameter $0<\alpha<\min \left\{(10^6 dr)^{-1},  (100c^{-1}(d-1))^{-1},\frac{cd}{10(d-1)}-\frac{1}{10},\frac{cd-(d-1)}{10(d-1)+1} \right\}$ as well as the notation $	l_{(N)} \coloneqq \Log N^\alpha \text{ and }  L_{(N)} \coloneqq  \Log N+10l_{(N)}.$

	At scale $N$ the major arc for our averages is $\mathcal{M}_{\leq l_{(N)},-(d-1)L_{(N)}}.$ We require the following ingredients which will be expanded as lemmata below: minor arc control and the structure of the $\mathbf{m}_N$ on the major arcs. 
	
	The lemma below shows that away from the major arcs the size of $\mathbf{m}_N$ is small.  
	\begin{lemma}\label{l:minorarcalldegree}
		For $\xi \notin \mathcal{M}_{\leq l_{(N)}, \leq -(d-1)L_{(N)}}$, we have
		\begin{align*}
			\left| \mathbf{m}_N(\xi)\right| \lesssim N^{-\delta}, 
		\end{align*}
		for some $\delta >0$. 
	\end{lemma}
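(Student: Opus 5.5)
We prove the minor arc bound by combining a short-interval Weyl inequality with Dirichlet approximation; the threshold $c>(d-1)/d$ is exactly what the comparison of scales below requires. Write $H=\floor{N^c}$ and expand
\[
P(N+h)=\sum_{j=0}^d \beta_j h^j,\qquad \beta_d=a_d,\quad \beta_{d-1}=a_d dN+a_{d-1},\ \ldots,\ \beta_1=P'(N).
\]
Then $\mathbf m_N(\xi)=\EE_{h\in[H]}e(-\xi P(N+h))$ is a Weyl sum of length $H$ in $h$ whose top coefficient is $a_d\xi$. By Dirichlet, choose $q\leq H^{d-\alpha'}$ and $(a,q)=1$ with $|a_d\xi-a/q|\leq q^{-1}H^{-d+\alpha'}$, for a small $\alpha'$ tied to $\alpha$. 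Weyl's inequality (or Vinogradov mean value / Bourgain--Demeter--Guth for a better exponent) then gives
\[
|\mathbf m_N(\xi)|\lesssim H^{\epsilon}\Bigl(q^{-1}+H^{-1}+qH^{-d}\Bigr)^{2^{1-d}}.
\]
This is $\lesssim N^{-\delta}$ unless $q\leq H^{\alpha''}$, which in particular gives $q\leq N^{\alpha}$ after adjusting constants. Dividing out $a_d$ (at the cost of a bounded factor in $q$), we have thus reduced to $\xi$ lying within $H^{-d+\alpha'}$ of some $a/q$ with $q\leq 2^{l_{(N)}}$ (up to constants depending on $a_d$).

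Next we sharpen the localisation by keeping the full polynomial structure. Write $\xi=a/q+\beta$ and split $h$ into residues mod $q$. This gives the standard approximation
\[
\mathbf m_N(\xi)=G(a/q)\,\EE_{h\in[H]}e(-\beta P(N+h))+O\bigl(qH^{-1}(1+H|\beta P'(N)|+\cdots)\bigr).
\]
The continuous average $\int_0^1 e(-\beta P(N+Ht))\,dt$ is governed by the linear-in-$t$ phase, whose derivative is $\asymp \beta HN^{d-1}$. Since $H\leq N$, the second-order term $\beta H^2N^{d-2}$ is dominated, so van der Corput (first derivative test) gives decay $\lesssim (|\beta| HN^{d-1})^{-1}$. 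Hence $|\mathbf m_N(\xi)|\lesssim N^{-\delta}$ unless $|\beta|\lesssim N^{-(d-1)}H^{-1}N^{\delta}$.

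The Gauss sum factor helps too: $|G(a/q)|\lesssim q^{-1/d+\epsilon}$, so if $q\geq N^{\alpha/2}$ we also get a power saving. The remaining case has $q\leq 2^{l_{(N)}}$ and $|\beta|\lesssim N^{-(d-1)-c+\delta}$, and we must check that this lies inside the major arc. That arc has width $2^{-(d-1)L_{(N)}}\approx N^{-(d-1)}N^{-10(d-1)\alpha}$. Since $c>10(d-1)\alpha+\delta$ by the constraints on $\alpha$, the remaining case is indeed inside $\mathcal M_{\leq l_{(N)},\leq-(d-1)L_{(N)}}$, which is a contradiction for $\xi$ off the major arcs.

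The main obstacle is the intermediate regime. Here $|\beta|$ lies between $N^{-(d-1)}H^{-1}$ and the Dirichlet width $q^{-1}H^{-d+\alpha'}$, and the error term $qH^{-1}\cdot H|\beta| N^{d-1}$ in the approximation step is not small. To handle it, we do not use the approximation formula there. Instead we apply the first-derivative test directly on each residue-class subsum: the linear coefficient $\beta P'(N)$ lies modulo $1/q$ away from integers by at least $|\beta|N^{d-1}q\cdot$(const). We use $H^{-d}\ll N^{-(d-1)}$, which is precisely $c>(d-1)/d$, to ensure the Dirichlet approximation from the Weyl step pins $\beta$ finely enough that $|\beta|N^{d-1}\ll 1/q$. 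This is where the hypothesis on $c$ is consumed, and I would verify the exponent bookkeeping against the listed constraints on $\alpha$.
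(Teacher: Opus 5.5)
Your argument is correct, but it is longer than it needs to be, and one remark in it is mistaken.

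\textbf{How the paper does it.} The paper applies Dirichlet to $a_d\xi$ at level $2^{(d-1)L_{(N)}}\approx N^{(d-1)(1+10\alpha)}$, which is exactly the inverse width of the major arcs. Then $\xi\notin\mathcal M_{\leq l_{(N)},\leq -(d-1)L_{(N)}}$ immediately forces $q\gtrsim N^{\alpha}$, and one application of Weyl's inequality finishes. The hypothesis $c>(d-1)/d$ enters only through the term $q/H^{d}\lesssim N^{(d-1)(1+10\alpha)-cd}$.

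\textbf{Your first paragraph already suffices.} You take the larger level $H^{d-\alpha'}$, and your first paragraph performs the same Weyl step. You could stop right there. In the remaining case $q\leq H^{\alpha''}$, $\xi$ lies within $H^{-d+\alpha'}\approx N^{-cd+c\alpha'}$ of a fraction whose denominator is at most $2^{l_{(N)}}$. The standing constraint $\alpha<\frac{cd-(d-1)}{10(d-1)+1}$ gives
\[
cd>(d-1)(1+10\alpha)+\alpha .
\]
So for $\alpha'\leq\alpha$ and $N$ large, $H^{-d+\alpha'}\leq N^{-(d-1)(1+10\alpha)}\leq 2^{-(d-1)L_{(N)}}$, and $\xi$ already lies on a major arc.

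\textbf{Your $\beta$-analysis is valid but not needed.} The approximation by $G(a/q)$ times a linear-phase average, followed by the first-derivative test, is correct. It actually proves more than the lemma: decay off the much thinner arcs of width about $N^{-(d-1)-c+\delta}$, which is the natural decay scale of $V_N$. But the lemma as stated does not require it.

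\textbf{The ``main obstacle'' is not real.} In the intermediate regime the approximation error satisfies
\[
q|\beta|N^{d-1}\leq H^{-d+\alpha'}N^{d-1}\approx N^{(d-1)-cd+c\alpha'},
\]
which is a negative power of $N$ precisely because $c>(d-1)/d$. This is the same inequality $H^{-d}\ll N^{-(d-1)}$ that you invoke for your residue-class fix. So your claim that this error ``is not small'' is false, and the approximation formula already covers that range. The direct Kusmin--Landau bound on each residue class is also fine, but it works around a problem that is not there.
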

	\begin{proof}
		We perform the change of variables $n \mapsto N+u$ to rewrite the multiplier as a normalized exponential sum of length $\floor{N^c}$
		\begin{align*}
			\mathbf{m}_N(\xi)= \frac{1}{N^c} \sum_{u \in [0,N^c]} e \left(-\xi P(N+u) \right)+O(N^{-c}).
		\end{align*}
		Observe that $\xi P(N+u)$ is a polynomial of degree $d$ with leading coefficient $\xi a_d$. By Dirichlet's approximation theorem, one can find  $ q \in \left[2^{(d-1)L_{(N)}}\right]$ such that $\left| \xi a_d-\frac{a}{q} \right| \leq \frac{1}{q2^{(d-1)L_{(N)}}}$. As such, $ \left| \xi-\frac{a}{qa_d} \right| \leq \frac{1}{q2^{(d-1)L_{(N)}}} \leq \frac{1}{q^2}$. We can write $\frac{a}{qa_d}=\frac{a'}{q a_d'}$ with $(a',qa_d')=1$. Since $\xi \not \in \mathcal{M}_{\leq l_{(N)}, \leq -(d-1)L_{(N)}}$, it must be that $q \gtrsim N^{\alpha}$. In turn, we must have $q a_d' \gtrsim N^{\alpha}.$ Using Weyl's inequality, see for example  \cite[Theorem 4.3]{Nathanson1996}, it follows that 
		\begin{align*}
			|\mathbf{m}_N(\xi)| \lesssim   \frac{1}{N^c} N^{c(1+\frac{\alpha}{10002^{100d}})} \left( \frac{1}{N^{\alpha}} +\frac{1}{N^c}+\frac{N^{(d-1)(1+10\alpha)}}{N^{cd}} \right)^{\frac{1}{2^{d-1}}} \lesssim N^{-\delta}
		\end{align*}
		for some $\delta>0$. 
	\end{proof}
	The continuous counterpart of our averages on the frequency side is $V_N$ defined by the formula \begin{align*}
		V_N(\xi) \coloneqq  \frac{1}{N^c}  \int_{N}^{N+N^c}e(-\xi a_d t^d) \d t. 
	\end{align*} Standard considerations reveal its oscillatory and stationary behavior which we may quantify as follows \begin{equation}
		\label{eq:vn}
		V_{N}(\xi)= \begin{cases}
			O\left ( \left| N^{d-1+c} \xi  \right|^{-1} \right) \\ 
			1+O\left (|N^d\xi| \right ). 
		\end{cases}
	\end{equation}
	From the following proposition we learn the structure of the multiplier on a major arc.
	\begin{proposition}\label{p:majorarcalldeg}
		Let $\frac{A}{Q} \in \Sigma_l$, where $ N \geq 2^{l/\alpha} $ and $|\xi - A/Q| \leq 2^{-(d-1)L_{(N)}} $. We have 
		\begin{align}\label{eq approx on major arcs}
			\left|	\mathbf{m}_N(\xi) -G\left (\frac{A}{Q}\right ) V_N \left( \xi - \frac{A}{Q}\right) \right|  \lesssim N^{-\delta}. 
		\end{align}
		%	for some $\de >0$. 
	\end{proposition}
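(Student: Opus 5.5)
Write $\beta:=\xi-A/Q$, so $|\beta|\le 2^{-(d-1)L_{(N)}}\lesssim N^{-(d-1)}N^{-10(d-1)\alpha}$, and $Q\le 2^l\le N^{\alpha}$. The plan is the standard major arc approximation, carried out on the short window $[N,N+N^c]$. We split the window into residue classes modulo $Q$, treat the rational phase $A/Q$ exactly on each class, and compare the remaining smooth phase $\beta P$ with an integral.

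\textbf{Step 1 (residue classes).} Writing $n=Qm+s$ with $s\in[Q]$, we have $e(-\xi P(n))=e(-\tfrac AQ P(s))\,e(-\beta P(Qm+s))$, because $P(Qm+s)\equiv P(s)\pmod Q$. Hence
\[
\mathbf m_N(\xi)=\frac{1}{\floor{N^c}}\sum_{s\in[Q]}e\Bigl(-\tfrac AQ P(s)\Bigr)\sum_{m:\,Qm+s\in[N,N+N^c]}e\bigl(-\beta P(Qm+s)\bigr).
\]

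\textbf{Step 2 (sum to integral).} For fixed $s$, we compare the inner sum over $m$ with $\frac1Q\int_N^{N+N^c}e(-\beta P(t))\,\d t$. Euler--Maclaurin, or summation by parts, gives an error of $O(1)$ from the endpoints. It also gives an error of $O\bigl(N^c\cdot Q\sup_{t\in[N,2N]}|\beta P'(t)|\bigr)$ from the variation of the phase across a step of length $Q$. Since $|\beta P'(t)|\lesssim |\beta|N^{d-1}\lesssim N^{-10(d-1)\alpha}$ and $Q\le N^\alpha$, this error is $\lesssim N^{c}N^{-\alpha}$ because $10(d-1)\ge 2$. We then sum over the $Q$ residues, each weighted by a unimodular factor, and divide by $N^c$. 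The total error is $\lesssim QN^{-c}+N^{\alpha-10(d-1)\alpha}\lesssim N^{\alpha-c}+N^{-\alpha}$. This is $\lesssim N^{-\delta}$, since $\alpha<c$ by the constraint on $\alpha$. The main term becomes $G(A/Q)\cdot\frac1{N^c}\int_N^{N+N^c}e(-\beta P(t))\,\d t$, with $G$ exactly as defined in the paper.

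\textbf{Step 3 (removing lower order terms).} We replace $P(t)$ by $a_dt^d$ inside the integral. Since $|P(t)-a_dt^d|\lesssim N^{d-1}$ on $[N,2N]$, the phase changes by at most $|\beta|N^{d-1}\lesssim N^{-10(d-1)\alpha}$. This costs $O(N^{-\delta})$, and we arrive at $G(A/Q)V_N(\beta)$. Finally, $\floor{N^c}$ and $N^c$ differ by $O(1)$, which contributes a relative error of $O(N^{-c})$.

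\textbf{Main obstacle.} The main point to watch is Step 2. In the long-interval setting the phase derivative is controlled by $|\beta|N^{d-1}$ over a range of length $N$. Here the range has length $N^c$, but the derivative is still evaluated at height $N$. The major arc width $2^{-(d-1)L_{(N)}}$, which carries the extra $N^{-10(d-1)\alpha}$ factor built into $L_{(N)}$, is exactly what makes $Q|\beta|N^{d-1}$ small. We must also check that the residue class counts are $N^c/Q+O(1)$, so that the $O(Q)$ boundary losses are negligible; this is where we need $Q\le N^\alpha\ll N^c$.
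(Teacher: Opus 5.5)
Your proposal is correct and is essentially the paper's argument: split $[N,N+N^c]$ into residue classes modulo $Q$, factor out the Gauss sum $G(A/Q)$, and compare each inner sum with $\frac1Q\int_N^{N+N^c}$. The only cosmetic difference is the order of two steps: the paper discards the lower-order part of the phase, replacing $\beta P(kQ+r)$ by $\beta a_d(kQ)^d$, \emph{before} passing to the integral, whereas you do this \emph{afterwards}, inside the integral.

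One small bookkeeping slip: for a fixed residue the phase-variation error is $O(N^c\sup|\beta P'|)$, not $O(N^cQ\sup|\beta P'|)$, since there are about $N^c/Q$ steps of length $Q$. Your stated total $QN^{-c}+Q|\beta|N^{d-1}\lesssim N^{\alpha-c}+N^{-\alpha}$ is nevertheless the right one, and it matches the paper's error term $O(2^lN^{-c}+|\beta|N^{d-1}2^l)$.
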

	\begin{proof}
		Let $ \xi=\frac{A}{Q}+h$. For $n \in [N]$ and $1 \leq r \leq Q$, we have 
		\begin{align*}
			& \frac{A}{Q} P(kQ+r) \equiv \frac{A}{Q}P(r) \mod 1, \\
			& hP(n+r) =h a_dn^d+O\left(h N^{d-1}Q \right).
		\end{align*}
		We rewrite the multiplier $\mathbf{m}_N$ as follows. 
		\begin{align*}
			& \frac{1}{N^c} \sum_{N \leq n \leq N+N^c}e\left(- \left(\frac{A}{Q}+h\right) P(n) \right)=\frac{1}{N^c} \sum_{0 \leq r<Q} \sum_{ \frac{N-r}{Q} \leq k \leq \frac{N+N^c-r}{Q} } e\left(-  \left(\frac{A}{Q}+h\right) P(kQ+r)  \right) \\ 
			& = \frac{1}{N^c} \sum_{0 \leq r<Q} \left(\sum_{ \frac{N-r}{Q} \leq k \leq \frac{N+N^c-r}{Q} }\left(e\left( -\frac{A}{Q} P(r)-ha_d(kQ)^d \right) +O\left(hN^{d-1}Q\right)\right) \right) \\ 
			& =\frac{1}{N^c} \sum_{0 \leq r<Q}e\left(-\frac{A}{Q} P(r)\right) \left(\sum_{\frac{N-r}{Q} \leq k \leq \frac{N+N^c-r}{Q}}\left(e(-ha_d(kQ)^d)\right) \right)+ O\left(hN^{d-1}Q\right) . 
		\end{align*}
		The right-hand side of the equality above is equal to 
		\begin{align*}
			%		& =\sum_{0 \leq r<Q}e\left(\frac{A}{Q}p(r)\right)\frac{1}{QN^c}\int_{N}^{N+N^c} e(ha_dt^d) \d t  + O\left(  2^l N^{-c} + |h| N^{d-1} 2^l  \right)\\ 
			&  G\left (\frac{A}{Q}\right ) V_{N}(h)  + O\left(  2^l N^{-c} + |h| N^{d-1} 2^l  \right) 
		\end{align*}
		by utilizing the relation \[ \sum_{n=a}^bf(n)=\int_{a}^bf(x) \d x +O( \left \| f \right \|_{\infty}+(b-a+1) \left \| f' \right \|_{\infty} ).  \]

	\end{proof}
	
	We conclude the series of auxiliary lemmata with one that establishes the $L^p(\R)$ boundedness of the continuous analogue  of the maximal operator in question, which follows by \cite{SteinRealVariableMethods}[Chapter II, Theorem 4] however we choose to present an alternative argument.
	\begin{lemma} \label{l:maxfunctionvariant} For every $p>1$, we have that 
		\[ \left \|  \sup_{N \in \mathbb{D}_{\lambda}} |f* V_N^{\vee}|  \right \|_{L^{p}(\R)} \lesssim \left \| f \right \|_{L^p(\R)}. \] 
	\end{lemma}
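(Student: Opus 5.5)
The plan is to compare each $f*V_N^\vee$ with a standard approximation of the identity at scale $N^d$, and to control the difference by a square function that converges because $\mathbb D_\lambda$ is lacunary. Fix an even Schwartz function $\phi$ with $\widehat\phi(0)=1$, and write $\phi_t(x):=t^{-1}\phi(x/t)$, so that $\widehat{\phi_t}(\xi)=\widehat\phi(t\xi)$. Put $t_N:=a_dN^d$. Then
\[
\sup_{N\in\mathbb D_\lambda}|f*V_N^\vee|\leq \sup_{N}|f*\phi_{t_N}| + \Bigl(\sum_{N\in\mathbb D_\lambda}|f*(V_N-\widehat{\phi_{t_N}})^\vee|^2\Bigr)^{1/2}.
\]
The first term is bounded by the Hardy--Littlewood maximal function, which handles it for every $p>1$. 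Everything therefore rests on the square function $S f:=\bigl(\sum_N |f*K_N|^2\bigr)^{1/2}$, where $K_N:=(V_N-\widehat{\phi_{t_N}})^\vee$.

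\emph{$L^2$ bound.} By Plancherel it suffices to show $\sup_\xi\sum_N |V_N(\xi)-\widehat\phi(t_N\xi)|^2<\infty$. From \eqref{eq:vn}, and the rapid decay of $\widehat\phi$, each summand is $\lesssim \min\{|N^d\xi|, |N^{d-1+c}\xi|^{-1}\}$. The values $N^{d}$ and $N^{d-1+c}$ are lacunary in $N\in\mathbb D_\lambda$ (indeed super-lacunary). The scales with $|N^d\xi|\le1$ therefore give a geometric sum dominated by its largest term, and likewise for the scales with $|N^{d-1+c}\xi|\geq1$. This leaves the "transition" scales, where $N^{-d}\lesssim|\xi|\lesssim N^{-(d-1+c)}$, on which each summand is $O(1)$. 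Since $N_{k+1}=N_k^\lambda$, the ratio $N_{k+1}^{d-1+c}/N_k^{d}$ tends to infinity, so only $O(1)$ scales $N$ fall in this window (at most one for large $N$). Hence the sum is bounded uniformly in $\xi$.

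\emph{$L^p$ bound.} I view $S$ as the vector-valued convolution operator $f\mapsto (f*K_N)_N$ into $\ell^2$ and apply the vector-valued Calder\'on--Zygmund theory, which requires Hörmander's condition $\int_{|x|>2|y|}\|(K_N(x-y)-K_N(x))_N\|_{\ell^2}\,\d x\lesssim1$. Here is the concrete description of the kernel. The function $V_N^\vee$ is the pushforward of the normalized uniform measure on $[N,N+N^c]$ under $t\mapsto a_dt^d$. That is, $V_N^\vee(x)=\frac{1}{N^c}\frac{1}{|P_d'(P_d^{-1}(x))|}\ind_{I_N}(x)$ with $P_d(t)=a_dt^d$. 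This is an $L^1$-normalized bump on the interval $I_N=P_d([N,N+N^c])$, which has length $\simeq N^{d-1+c}$ and sits at distance $\simeq N^d$ from the origin. The kernel is monotone and smooth on $I_N$, with jump discontinuities only at the endpoints. Its translation modulus therefore satisfies $\|V_N^\vee(\cdot-y)-V_N^\vee\|_{L^1}\lesssim \min\{1,|y|N^{-(d-1+c)}\}$. The naive Hörmander integral is not small, because each $K_N$ has $L^1$ mass $O(1)$ located far from the origin. I would therefore sum in $N$ using the $\ell^1$ norm, which dominates the $\ell^2$ norm. Scales with $N^{d-1+c}\ge|y|$ contribute $\sum |y|N^{-(d-1+c)}\lesssim1$ by lacunarity. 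Scales with $N^{d}\lesssim|y|$ contribute nothing, since then $I_N\subseteq\{|x|\lesssim|y|\}$, which the region $|x|>2|y|$ excludes up to a constant adjustment. The $\phi$ parts are handled similarly. Only the window $N^{d-1+c}<|y|\lesssim N^d$ remains, and by the double lacunarity it contains $O(1)$ scales, each contributing $O(1)$. This gives the $L^p$ bound for $p\in(1,2]$ through weak $(1,1)$ and interpolation. For $p>2$ I would use duality, since the adjoint kernels, reflections of the $K_N$, satisfy the same condition.

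The main obstacle is the Hörmander estimate, and specifically the transition window. There the kernels are neither smooth at scale $|y|$ nor concentrated near the origin. This is exactly where the double lacunarity of $\mathbb D_\lambda$ is needed: it guarantees $O(1)$ bad scales both in the $L^2$ bound and in the $L^1$ smoothness bound. A single-lacunary sequence would give about $\log N$ bad scales in that window, and the argument would fail.
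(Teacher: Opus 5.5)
Your proof is correct, but it is not the paper's argument.

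\textbf{The paper's route.} The paper works entirely on the physical side. It dominates the operator by the positive maximal function $\sup_{N}N^{-(d-1+c)}\int_{N^d}^{(N+N^c)^d}|f(x-y)|\,\d y$ and proves weak type $(1,1)$ by a covering argument relative to $\{\mathrm Mf>c'\sigma\}$:
\begin{enumerate}
\item If the average over $I_x=x-[N_x^d,(N_x+N_x^c)^d]$ exceeds $\sigma$, then $I_x\subseteq\{\mathrm Mf>c'\sigma\}$.
\item Hence every Whitney interval $Q$ of that set meeting $I_x$ has $\ell_Q\gtrsim N_x^{d-1+c}$, and $I_x\subseteq CQ$.
\item Points with $N_x^d\leq\ell_Q$ lie in a dilate of $Q$. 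Points with $N_x^d>\ell_Q$ come from $O(1)$ scales, each contributing a translate of a dilate of $Q$.
\item Therefore $|E_\sigma|\lesssim\sum_Q|Q|\lesssim\sigma^{-1}\|f\|_{L^1}$, and the trivial $L^\infty$ bound with Marcinkiewicz interpolation finishes.
\end{enumerate}

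\textbf{Your route and the comparison.} You instead subtract a lacunary approximate identity and control the square function of the differences through Plancherel and the vector-valued Calder\'on--Zygmund theorem. Both proofs hinge on the same fact: for a given length $s$, only $O(1)$ elements of $\mathbb D_\lambda$ satisfy $N^{d-1+c}\lesssim s\lesssim N^d$. For the paper $s=\ell_Q$; for you $s=|y|$ or $|\xi|^{-1}$.

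The paper's route is shorter and avoids Fourier analysis and vector-valued theory, but it uses positivity of $V_N^\vee$ in an essential way. Yours is heavier but gives the stronger square-function estimate. It uses only the size and bounded variation of $V_N^\vee$, not its sign. It is also essentially the computation the paper itself performs later for $A_{2r}$, namely the bound \eqref{eq:linfty} and the H\"ormander estimate for $K_\omega$, so it meshes naturally with the variational part of the argument.

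\textbf{Two small points.} First, your duality step for $p>2$ is unnecessary. Since $V_N^\vee\geq0$ has unit mass, the maximal operator is trivially bounded on $L^\infty$, and interpolating with your $L^2$ bound covers $p\geq2$. Second, if $a_d<0$ you should define $\phi_{t_N}$ using $|t_N|$.
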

	\begin{proof}
		A change of variables allows us to focus on the case $a_d=1.$	It is clear that it suffices to establish the weak type $(1,1)$ bound. We fix a level $\sigma>0$ and consider the level set \[ E_{\sigma} \coloneqq \left\{x : \sup_{N \in \mathbb{D}_{\lambda}} \frac{1}{N^{d-1+c}}\int_{N^d}^{(N+N^c)^d}|f(x-y)| \d y > \sigma   \right\}. \]  If $x \in E_{\sigma}$ then there exists $N_x \in \mathbb{D}_{\lambda}$ with the property that  \begin{equation} \label{eq:bigaverage}
			\frac{1}{N_x^{d-1+c}} \int_{x-[N_x^d,(N_{x}+N_x^{c})^d]}|f|(y) \d y > \sigma
		\end{equation} which implies that $I_x \coloneqq x-[N_x^d,(N_x+N_x^c)^{d}] \subseteq \left\{\mathrm{M}f > c' \sigma \right\}$, for a very small positive constant $c'.$ Now, if we take a Whitney decomposition $\mathcal{W}$ of $\left\{\mathrm{M}f > c'  \sigma\right\}=\bigcup_{Q \in \mathcal{W}}Q$  and we have that $ \ell_Q \leq \frac{1}{100}  N_x^{d-1+c} $ for some $Q  $ intersecting $I_x$ then $Q \subset 3 \left(x-[N_x^d,(N_x+N_x^c)^{d}] \right) $. Leveraging the fact that $\inf_J \mathrm{M}f \lesssim \inf_{3J} \mathrm{M}f$, and that $10Q$ must intersect the complement of $\left\{ \mathrm{M}f >c' \sigma \right\}$, it must be that $ \inf_{I_x} \mathrm{M}f  \lesssim  \inf_Q  \mathrm{M}f \lesssim c'  \sigma  $ which cannot happen due to \eqref{eq:bigaverage}. Hence, we are able to infer that $|Q| \geq \frac{1}{100}N_x^{d-1+c}$ whenever $Q \cap  I_x \neq \varnothing. $ Consequently, $I_x \subset 6002^{20d}Q$ which implies that \[ E_{\sigma} \subset \bigcup_{Q \in \mathcal{W}} \left(\left\{x: I_x \subset 6002^{20d}Q,I_x \cap Q \neq \varnothing, N_x^d \leq \ell_Q   \right\} \cup  \left\{x: I_x \subset 6002^{20d}Q,I_x \cap Q \neq \varnothing, N_x^d > \ell_Q  \right\}\right).    \] It is clear that the first set is contained in a $O(1)$ dilate of $Q$. Additionally,    we may notice that the scales that contribute to the second set are $O(1)$ many, independent of Q\footnote{Simply by parametrizing $N $ as $N=N_k={\lambda}^{\lambda^n}$.},  for each fixed $Q \in \mathcal{W}$ hence it is contained in a union of $O(1)$, independent of $Q$, intervals of length controlled by $|Q|$; hence 
        \[\left|\left\{x: I_x \subset 6002^{20d}Q,I_x \cap Q \neq \varnothing, N_x^d > \ell_Q  \right\}\right|  \lesssim |Q|\] therefore \[ |E_{\sigma}| \lesssim \sum_{Q \in \mathcal{W}}|Q| \lesssim \frac{1}{\sigma}  \left \| f \right \|_{L^1(\R)}. \]
	\end{proof}
	With the lemmata above we are ready to give a proof of Proposition \ref{p:varintlp} 
	\begin{proof}
		We start by decomposing $A_Nf=A_N(\Pi_{\leq l_{(N)},\leq -(d-1)L_{(N)}}f)+A_N(f-\Pi_{\leq l_{(N)},\leq -(d-1)L_{(N)}}f).$ Focusing on the major arc piece, we write it as a  sum of its single rational complexity counterparts   \begin{equation} \label{eq:singlecomplexitydecomp}
			A_N(\Pi_{\leq l_{(N)},\leq -(d-1)L_{(N)}}f)=\sum_{l \leq l_{(N)}}A_N(\Pi_{ l,\leq -(d-1)L_{(N)}}f).
		\end{equation} 
		By Proposition \ref{p:majorarcalldeg},	\eqref{eq:vn}, \cite[(6.11)]{KMSZ26} and Riesz-Thorin interpolation (the condition $-(d-1)L_{(N)} \leq -6\max \left\{p,p'\right\}(l+1)$ is evident by the choice of the parameter $\alpha$) we have that \begin{equation} \label{eq:majorarcerror} 
			\left \| A_{N} \left(\Pi_{ l,\leq -(d-1)L_{(N)}}f \right)-T_{\Z} \llbracket G, V_N \eta_{\leq-(d-1)L_{(N)}} \rrbracket_{\Sigma_l} f \right \|_{\ell^p(\Z)} \lesssim N^{-\tau_p} \left \| f \right \|_{\ell^p(\Z)}. 
		\end{equation}
		
		We next verify that at the fixed dyadic complexity scale, the $\ell^p(\Z)$ norm of our variation operator exhibits a power gain   \begin{equation} \label{eq:singlecomplexitygain}
			\left \| \mathcal{V}^{\rho}\left(T_{\Z} \llbracket G, V_N \eta_{\leq -(d-1)L_{(N)}} \rrbracket_{\Sigma_l} f; l_{(N)} \geq l, N \in \mathbb{D}_{\lambda}\right)  \right \|_{\ell^p(\Z)} \lesssim 2^{-\nu_p l} \left \| f \right \|_{\ell^p(\Z)}. 
		\end{equation}
		We apply \cite[Theorem 5.2]{KMSZ26} at dimension $1$ with $N_k=\eps_k^{-1}=\lambda^{\lambda^k}$ which satisfy the hypothesis $0 \leq \eps_k < (4r 2^{l2r})^{-1}$ precisely because of the scale restriction in the variational norm of \eqref{eq:singlecomplexitygain} and $H_1=H_2=\mathbb{C}.$ It remains to show that 
		\begin{equation}
			\begin{split}
				B_p & \coloneqq  \sup_{ \left \| f \right \|_{L^p(\R)} =1 }  \left \|  \mathcal{V}^{\rho}\left( T_{\R} [V_{N_k}\eta_{\leq -(d-1)L_{(N_k)}} ] f ; \ k\in \N \right) \right \|_{ L^p(\R)}
			\end{split}
		\end{equation}
		\begin{equation}
			\begin{split}
				A_{2r} & \coloneqq   \sup_{ \left \| f \right \|_{L^{2r}(\R)} =1 }\sup_{\omega \in \{-1,1\}^{\N}  } \left \| \sum_{k \in \N} \omega(k) T_{\R} \left[ V_{N_{k+1}}\eta_{\leq -(d-1)L_{(N_{k+1})}} - V_{N_k}\eta_{\leq -(d-1)L_{(N_k)}} \right]f  \right \|_{L^{2r}(\R)} 
			\end{split}
		\end{equation}
        are bounded. 
		For $B_p$ we initially note  \[ \begin{split}
			&	 \left(V_{N_k}\eta_{\leq -(d-1)L_{(N_k)}}\right)(0)=1,\\
            &\;\;    \left|V_{N_k}(\xi)\eta_{\leq -(d-1)L_{(N_k)}}(\xi)\right| \lesssim \frac{1}{2^{2^{k}} |\xi| },  \\
            &\; \;  \supp\left(V_{N_k}\eta_{\leq -(d-1)L_{(N_k)}}\right) \subseteq \left\{ \xi: |\xi| \lesssim 2^{-2^k}  \right\}. 
		\end{split} \]  Combining the aforementioned properties and the fact that for all $t>1$, \[ \sup_{ \left \| f \right \|_{L^t(\R)} =1 }  \left \|  \mathcal{V}^{\infty}\left( T_{\R} [V_{N_k}\eta_{\leq -(d-1)L_{(N_k)}} ] f ; \ k\in \N \right) \right \|_{ L^t(\R)} \lesssim 1, \] which follows by Lemma \ref{l:maxfunctionvariant} we have that $B_p \lesssim 1$ by a straightforward adaptation of \cite[Remark 3.134]{KrauseBook} on the real line without the positivity  hypothesis. In regards to the finiteness $A_{2r}$ we will in fact prove that for all $t>1$,  $A_{t}< \infty.$ First, for $t=2$, we show that  \begin{equation}\label{eq:linfty} \sup_{\xi \in \R }\sum_{k}\left|V_{N_{k+1}}\eta_{\leq -(d-1)L_{(N_{k+1})}}(\xi) - V_{N_k}\eta_{\leq -(d-1)L_{(N_k)}}(\xi) \right| \lesssim 1
		\end{equation}  which by Plancherel and trivial $L^{\infty}$ bounds on $V_{N} $ and $\eta$ imply that $A_{2}<\infty.$  Constructing the cancellative bump $\psi(\xi) \coloneqq \eta(2 \xi )-\eta(\xi )$ we can quickly realise that $V_ N$  behaves as the superposition of a non-cancellative bump at scale $N^{-d}$ and a cancellative one at scale $N^{-(d-1)-c}$ up to an error which vanishes at all but finitely many scales for a given frequency $\xi.$ Being more precise, from a moment's reflection using \eqref{eq:vn} one learns that
		\[ \begin{split}
			&	\left|V_N(\xi)-\eta(N^d \xi)-\psi(N^{(d-1)+c}\xi)\right|  \lesssim \cic{1}_{N^{-d}  \lesssim |\xi| \lesssim N^{-(d-1)-c} }+\mathsf{Err}_{N}(\xi)   \\ & \mathsf{Err}_{N}(\xi)   \coloneqq  \max \left\{ \min \left\{ N^{d}|\xi|, (N^{d}|\xi|)^{-\frac{1}{d}}   \right\},\min \left\{ N^{(d-1)+c}|\xi|, (N^{(d-1)+c}|\xi|)^{-1}   \right\}   \right\} .
		\end{split}\]
		It is quite standard that $\displaystyle{\sup_{\xi \in \R} }\left \| \mathsf{Err}_{N}(\xi) \right \|_{\ell^1(\lambda^{\N})} \lesssim 1.$ Furthermore, we can easily see that we have $\displaystyle{\sup_{\xi \in \R} \left \| \cic{1}_{N^{-d}  \lesssim |\xi| \lesssim N^{-(d-1)-c} } \right \|_{\ell^1(\mathbb{D}_{\lambda})}} \lesssim 1$ as for a term of the latter $\ell^1$ norm to be non-zero we must have $N^{-d}  \lesssim |\xi| \lesssim N^{-(d-1)-c} $ which implies that $ \log(N) \sim \log(|\xi|^{-1})$ but since $N \in \mathbb{D}_{\lambda}, $ there is only a finite number of such $N$, independent of $\xi$, that satisfy this condition. We write,	
        \begin{align*}
            &\left|V_{N_{k+1}} \eta_{\leq -(d-1)L_{(N_{k+1})}} - V_{N_k}\eta_{\leq -(d-1)L_{(N_k)}}\right|\\
            &= \left| V_{N_k}-V_{N_{k+1}}+V_{N_k}(\eta_{\le -(d-1) L_{(N_k)}}-1)-V_{N_{k+1}}(-1+\eta_{\le -(d-1) L_{(N_{k+1})}}) \right|.
        \end{align*}      But we have that \[ \left|V_{N_k}(\eta_{\le -(d-1) L_{(N_k)}}-1)\right| \lesssim \frac{1}{N_k^{d-1+c} N_k^{-(d-1)-10\alpha(d-1)} } \lesssim N_k^{-\delta}.  \] 
   Hence, \[\sum_{k \in \N} \left|V_{N_k}(\xi)(\eta_{\le -(d-1) L_{(N_k)}}-1)(\xi)\right| \lesssim 1. \] Furthermore, we have that \[ \begin{split}
			\left|V_{N_k}(\xi)-V_{N_{k+1}}(\xi)\right| & \lesssim \max_{i \in \left\{k,k+1\right\}}\left(\cic{1}_{N_{i}^{-d} \lesssim |\xi| \lesssim N_{i}^{-(d-1)-c} } + \mathsf{Err}_{N_i}(\xi)+\cic{1}_{|\xi| \sim N_i^{-(d-1)-c}}\right)+\cic{1}_{ N_{k+1}^{-d} \lesssim |\xi| \lesssim N_k^{-d} }, 
		\end{split} \] which implies the desired bound \eqref{eq:linfty}, as all error terms introduced are summable over $\mathbb{D}_{\lambda}.$ Once $A_2$ is shown to be finite,   $A_{t}$ will be controlled too for all $t>1$, once we obtain that the kernel of the convolution operator $f \mapsto  f*K_{\omega}$ where \[  K_{\omega}(x) \coloneqq \left( \sum_{k \in \N} \omega(k) \left[V_{N_{k+1}}\eta_{\leq -(d-1)L_{(N_{k+1})}} - V_{N_k}\eta_{\leq -(d-1)L_{(N_k)}} \right] \right)^{\vee}(x)  \] satisfies the H\"{o}rmander condition uniformly in $\omega \in \left\{-1,1\right\}^{\N}$, namely that \[  \sup_{\omega \in  \left\{-1,1\right\}^{\N}} \sup_{h \neq 0} \int_{|x| \geq 2|h|}|K_{\omega}(x-h)-K_{\omega}(x)| \d x  \lesssim 1.   \]
		
		Preliminarily we note the estimate \[ \begin{split}
			&\left \| \Tr_h\left( V_{N_k}^{\vee} * \eta_{\le -(d-1) L_{(N_k)}}^{\vee} \right)- V_{N_k}^{\vee} * \eta_{\le -(d-1) L_{(N_k)}}^{\vee}\right \|_{L^1(\R)}\\
            &  \lesssim  \min\left\{1, \left \| \Tr_hV_{N_k}^{\vee}-V_{N_k}^{\vee}  \right \|_{L^1(\R)} \left \|  \eta_{\le -(d-1) L_{(N_k)}}^{\vee}  \right \|_{L^1(\R)}\right\} \\ & \lesssim  \min \left\{1,\frac{|h|}{N_k^{d-1+c}}\right\}
		\end{split} \] 
		which follows directly after a routine calculation by taking into account that 
		\[ \widehat{V_N}(t)=  \frac{1}{N^cd \left |a_d\right |^{\frac{1}{d}} \left|t\right|^{\frac{1}{d'}}} \cic{1}_{[N^d,(N+N^c)^d]}\left (-\frac{t}{a_d}\right ). \] 
		
		Now, because the regime
		$ N_k \in \left[\frac{1}{100} |h|^{\frac{1}{10d}},  |h|^{\frac{1}{d-1+c}}\right] $ contains $O(1)$ many $k$'s we may dispose of it by using the triangle inequality. 
		
		For the moment we focus on $N_k \leq \frac{1}{100} |h|^{\frac{1}{10d}}.$ We  note the fact that  
		\[ \begin{split}
			&	\int_{|x| \geq 2 |h|} \left| f*g(x-h)-f*g(x) \right| \d x \leq  2 \int_{|x| \geq \frac{1}{10}|h|} \int_{\R}|f|(y) |g|(x-y) \d  x  \d y  
		\end{split}   \]
		and apply it with $ f=V_{N_k}^{\vee}, \; \; g= \eta_{\le -(d-1) L_{(N_k)}}^{\vee}.$ 
		Observe that in the support of $V_{N_k}^{\vee}$ we have that $y \sim N_k^d$ and also because $\eta \in \mathcal{S}$ it holds that \[  \left|(\eta_{\le - (d-1)L_{(N_k)}}^{\vee})(b)\right| \lesssim_M \frac{1}{N_k^{(d-1)(1+10\alpha )}} \left \langle \frac{b}{N_k^{(d-1)(1+10\alpha)}} \right \rangle^{-M}. \] Therefore, \[  \begin{split}
			&	\int_{|x| \geq 2 |h|} \left| V_{N_k}^{\vee}*\eta_{\le -(d-1) L_{(N_k)}}^{\vee}(x-h)-V_{N_k}^{\vee}*\eta_{\le -(d-1) L_{(N_k)}}^{\vee}(x) \right| \d x \\ & \lesssim    \int_{\R}  \frac{1}{y^{1-\frac{1}{d}}N_k^c} \cic{1}_{[N_k^d,(N_k+N_k^c)^d]}(y)   \int_{|t| \geq  \frac{1}{10} |h| } \frac{1}{N_k^{(d-1)(1+10\alpha)}} \left \langle \frac{t-y}{N_k^{(d-1)(1+10\alpha)}} \right \rangle^{-M} \d t  \d y   \lesssim N_k^{-10d},
		\end{split}  \]
		where for the passage to the last inequality is possible because $ |h| \geq 100^{10d} N_k^{10d}.$ We may put these estimates together to obtain, \[ \begin{split}
		&	\sup_{\omega \in  \left\{-1,1\right\}^{\N}}\int_{|x| \geq 2|h|}|K_{\omega}(x-h)-K_{\omega}(x)| \d x  \\ &\leq \sum_{k \in \N} 	\left \| \Tr_h\left( V_{N_k}^{\vee} * \eta_{\le - (d-1)L_{(N_k)}}^{\vee} \right)- V_{N_k}^{\vee} * \eta_{\le -(d-1) L_{(N_k)}}^{\vee}\right \|_{L^1(|x| \geq 2 |h| )} \\ &  \lesssim  \sum_{N_k < \frac{1}{100}|h|^{\frac{1}{10d}} }N_k^{-10d}+\sum_{N_k \in  \left[\frac{1}{100} |h|^{\frac{1}{10d}},  |h|^{\frac{1}{d-1+c}}\right] }1+ \sum_{N_k > |h|^{\frac{1}{d-1+c}} }\frac{|h|}{N_k^{d-1+c}} \lesssim 1.
		\end{split}  \]
		As in \cite{KMSZ26} we may factorize the multiplier and write  \[ T_{\Z}\llbracket G , V_N \eta_{\leq -(d-1)L_{(N)}} \rrbracket_{\Sigma_{  l}}= T_{\Z}  \llbracket 1, V_N \eta_{\leq -(d-1)L_{(N)}} \rrbracket_{\Sigma_{ \leq l}}  T_{\Z}\llbracket G, \eta_{\leq -(d-1)200(l+1)r} \rrbracket_{\Sigma_l}    \] and therefore \[ \left \| T_{\Z}\llbracket G , V_N \eta_{\leq -(d-1)L_{(N)}} \rrbracket_{\Sigma_{  l}}f \right \|_{\ell^2(\Z)}  \lesssim 2^{-\chi l}  \left \| f \right \|_{\ell^2(\Z)}  \] for some $\chi>0 $, Riesz-Thorin interpolation and the conclusion of \cite{KMSZ26}[Theorem 5.2] yields \eqref{eq:singlecomplexitygain}. A combination of \eqref{eq:majorarcerror},\eqref{eq:singlecomplexitydecomp} and \eqref{eq:singlecomplexitygain} yields \[\begin{split}
			\left \| \mathcal{V}^{\rho} \left( A_N \left(  \Pi_{\leq l_{(N)},\leq -(d-1)L_{(N)}} f \right) \right) \right \|_{\ell^p(\Z)} & \leq  \sum_{l \in \N} \left \| \mathcal{V}^{\rho}\left(T_{\Z} \llbracket G, V_N \eta_{\leq -(d-1)L_{(N)}} \rrbracket_{\Sigma_l} f; l_{(N)} \geq l, N \in \mathbb{D}_{\lambda}  \right) \right \|_{\ell^p(\Z)} \\ & +\sum_{l \in \N} \sum_{\substack{N \in \mathbb{D}_{\lambda} \\ l_{(N)} \geq l  }}	\left \| A_{N} \left(\Pi_{ l,\leq -(d-1)L_{(N)}}f \right)-T_{\Z} \llbracket G, V_N \eta_{\leq -(d-1)L_{(N)}} \rrbracket_{\Sigma_l} f \right \|_{\ell^p(\Z)} \\ &  \lesssim \left(\sum_{l \in \N} 2^{-\nu_p l}+\sum_{N \in \mathbb{D}_{\lambda}} l_{(N)} N^{-\tau_p}\right) \left \| f \right \|_{\ell^p(\Z)} \lesssim \left  \| f \right \|_{\ell^p(\Z)}. 
		\end{split}    \]
		We complete the proof by addressing the minor arc piece,  \begin{equation} \label{eq:minorarcerror}
			\left \| A_N(f-\Pi_{\leq l_{(N)},\leq -(d-1)L_{(N)}}f) \right \|_{\ell^p(\Z)} \lesssim N^{-\tau_p} \left \| f \right \|_{\ell^p(\Z)}
		\end{equation} which is immediate after combining \cite[(6.11)]{KMSZ26}, Lemma \ref{l:minorarcalldegree}, \eqref{eq:vn} in conjunction with proposition \ref{p:majorarcalldeg}  and the fact that $A_N$ is contraction on $\ell^p(\Z).$ In an immediate fashion, we have that \[ \left \| \mathcal{V}^{\rho} \left(A_N(f-\Pi_{\leq l_{(N)},\leq -(d-1)L_{(N)}}f) \right) \right \|_{\ell^p(\Z)} \lesssim \sum_{N \in \mathbb{D}_{\lambda}}	\left \| A_N(f-\Pi_{\leq l_{(N)},\leq -(d-1)L_{(N)}}f) \right \|_{\ell^p(\Z)} \lesssim \left \| f \right \|_{\ell^p(\Z)}.  \]
	\end{proof}
\section{Failure of pointwise convergence at $L^1$}

This section is devoted to proving that no universal pointwise ergodic theorem
holds in \(L^1\) for short-interval monomial averages of degree \(d\geq2\),
along any prescribed infinite subsequence of integers. In particular, the result
applies to every lacunary subsequence. We follow LaVictoire's paradigm
\cite{L9}, with the necessary modifications to account for the short-interval
nature of the averages. Throughout the section, \(\mathcal N\subseteq\mathbb N\)
is fixed, infinite, and otherwise arbitrary, and we set \(H_N \coloneqq  \floor{N^c}\) the short
interval's length. 

More precisely, the main task of this section is to prove the following finite
cyclic weak-type failure.
\begin{proposition}\label{p:finite-weak-failure}
	For every \(C>0\), there are a positive integer \(T\), the cyclic system
	\(\mathbb Z_T\) with shift \(x\mapsto x+1\), and a non-negative function
	\(f\in \ell^1(\mathbb Z_T)\) such that
	\[
	\left\|
	\sup_{N\in\mathcal N}
	\left|
A_{N}^{t^d,c}f(x)
	\right|
	\right\|_{\ell^{1,\infty}(\mathbb Z_T)}
	>
	C\|f\|_{\ell^1(\mathbb Z_T)}.
	\]
	Here \(\ell^1(\mathbb Z_T)\) and \(\ell^{1,\infty}(\mathbb Z_T)\) are taken with
	respect to normalized counting measure.
\end{proposition}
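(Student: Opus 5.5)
The plan is to follow LaVictoire's construction for $d$-th powers \cite{L9}, modified so that the local periodicity needed in his family lemma covers every polynomial value $(N+h)^d$ with $h\in[H_N]$, not only the values $n^d$ with $n\le N$.

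\emph{The cyclic model.} LaVictoire's argument for monomials runs as follows. Pick a large squarefree modulus $Q$ that is a product of many primes $p\equiv 1 \pmod d$. Consider the set $R_Q\subset\mathbb Z_Q$ of $d$-th power residues. Then $|R_Q|/Q$ is tiny, roughly $\prod_p d^{-1}$. Yet for every $x$ in a set of proportion close to $1$, some translate $x-R_Q$ meets a fixed small set. In his family lemma this is achieved with $f=\mathbf 1_{\{0\}}$-type spikes: the averages of $\mathbf 1_{\{0\}}$ along $n^d \bmod Q$ have size $\gtrsim |R_Q|^{-1}$ on $-R_Q$. Taking $\gamma$-many independent moduli and summing dilated copies, one finds sets $E$ and spikes $f$ with $\|f\|_1$ small and $\sup$-averages $\ge c_0$ on a set of measure $\ge C\|f\|_1$. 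I would restate this finite arithmetic input as a black box: for every $C$ there are $Q$ and a nonnegative $g$ on $\mathbb Z_Q$ such that
\[
\Bigl|\Bigl\{x: \max_{1\le j\le J}\EE_{r\in \mathbb Z_{Q_j}}\, g\bigl(x-r^d\bigr)>\sigma\Bigr\}\Bigr| > C\sigma^{-1}\|g\|_1 ,
\]
where $Q_j\mid Q$ are the moduli used in the construction.

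\emph{Reduction to short intervals.} The key step is choosing $N_j\in\mathcal N$ so that the short-interval average at scale $N_j$ reproduces $\EE_{r\in\mathbb Z_{Q_j}}g(x-r^d)$ up to a small error. Because $\mathcal N$ is infinite, I can take $N_j$ so large that $H_{N_j}=\lfloor N_j^c\rfloor\ge Q_j^{10}$. Then $h\mapsto N_j+h$ runs over $[N_j,N_j+H_{N_j}]$, which consists of $\lfloor H_{N_j}/Q_j\rfloor$ complete residue systems mod $Q_j$ plus a remainder of relative size $Q_j/H_{N_j}$. So if $g$ is $Q_j$-periodic in the relevant sense, then
\[
A^{t^d,c}_{N_j}g(x)=\EE_{r\in\mathbb Z_{Q_j}}g(x-r^d)+O(Q_jH_{N_j}^{-1}\|g\|_\infty).
\]
The obstacle is that LaVictoire's function lives on several moduli at once. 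His family lemma requires $g$ to look $Q_j$-periodic only on the range of values $n^d$ that scale $j$ samples, and then embeds everything in $\mathbb Z_T$ with $T$ a multiple of all $Q_j$ and much larger than $\max_j (N_j+H_{N_j})^d$, so that no wraparound occurs. This is where I would modify his argument. The required range must now be $[N_j^d,(N_j+H_{N_j})^d]$, which is an interval far from $0$. I would handle this by building the construction inductively: choose $N_1<N_2<\cdots$ with each $N_{j+1}$ so large that the earlier scales' windows of values sit inside one period of the later modulus structure. I would also use the residue structure of $(N_j+h)^d\equiv r^d \pmod{Q_j}$, which makes the translation invariance of $N_j$ irrelevant.

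\emph{Error control and conclusion.} The $\|g\|_\infty$ error is harmless once $\|g\|_\infty\lesssim_Q \|g\|_1 T$ is fixed before $N_j$ is chosen, because $H_{N_j}$ can be taken arbitrarily large relative to $Q_j$, $Q$ and $\sigma$. Then the level set of $\sup_{N\in\mathcal N}A^{t^d,c}_Ng$ contains the level set of the cyclic maximal function at level $\sigma/2$. This gives the weak-type failure with constant $>C$ after rescaling to normalized counting measure on $\mathbb Z_T$. I expect the main difficulty to be the inductive choice of the $N_j$ compatible with LaVictoire's family lemma: every value $(N_j+h)^d$ must be matched to the correct residue structure while keeping the different scales from interfering.
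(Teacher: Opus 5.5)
Your overall plan (reopen LaVictoire's family lemma and enlarge the local-periodicity range so that it covers the values $(N+h)^d$, $h\le H_N$) is the right one. However, the proposal never actually performs that enlargement, and the mechanism you sketch for it is circular.

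\textbf{The error-control step fails.} You fix $g$ first and then take $H_{N_j}$ so large that the error $O(Q_jH_{N_j}^{-1}\|g\|_\infty)$ is negligible. This breaks down in both readings of your setup.
\begin{itemize}
\item If $g$ lives on $\mathbb Z_Q$, as in your black box, then for $H_{N_j}\gg Q$ every $A^{t^d,c}_{N_j}g$ equals, up to that error, the single operator $g\mapsto\EE_{r\in\mathbb Z_Q}g(x+r^d)$. That is a convolution with a probability measure. So $\sup_j A^{t^d,c}_{N_j}g$ has weak-type ratio bounded by an absolute constant and cannot beat $C$.
\item If instead $g$ lives on $\mathbb Z_T$ and is only locally periodic, its periodicity range must reach $(N_j+H_{N_j})^d$. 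So $g$ cannot be built before $N_j$ is known. In LaVictoire's induction the range must be fixed before the new primes $p,q$ are chosen, because the exceptional sets cost about $\psi(AT_L)/(T_L\sqrt p)$ and $\psi(AT_L)\,\mathbb P(\Lambda_q)$. Moreover, you yourself require $T\gg\max_j(N_j+H_{N_j})^d$, so your bound $\|g\|_\infty\lesssim T\|g\|_1$ makes the error $Q_jT\|g\|_1/H_{N_j}$ large, not small.
\end{itemize}
The \emph{inductive choice of $N_1<N_2<\cdots$} does not resolve this.

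\textbf{The black box is the wrong object.} What LaVictoire's construction delivers is pointwise: a stopping modulus $Q_x$; residue domination $\frac{1}{|\Lambda_{Q_x}|}\sum_{a\in\Lambda_{Q_x}}f_h(x+a)\ge X_h(x)$ with pairwise independent $X_h$; and local periodicity $f_h(x+y-Q_x)=f_h(x+y)$ for $Q_x\le y\le\psi(AQ_x)$. It is precisely the range function $\psi$ in the last item that must be changed.

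\textbf{How the paper avoids this.} It fixes the range before the construction starts. All moduli come from a countable family $\mathcal Q$ chosen in advance. For each $Q\in\mathcal Q$ one picks $N(Q)\in\mathcal N$ with $\lfloor N(Q)^c\rfloor\ge4Q$ and sets $\Psi(u)=\lceil u\rceil+\max_{Q\le u}(N(Q)+\lfloor N(Q)^c\rfloor)^d$. LaVictoire's induction is then rerun with $\Psi$ in place of $\psi$, using only that $\Psi$ is finite and non-decreasing. This works because $p$ and $q$ are chosen after $T_L$, hence $\Psi(AT_L)$, is fixed. The scale used at $x$ is simply $N(Q_x)$, so no ordering of scales is needed.

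\textbf{No two-sided approximation is needed.} Since $f_h\ge0$, iterated local periodicity gives $f_h(x+(N(Q_x)+m)^d)=f_h(x+b)$, where $b\in\{1,\dots,Q_x\}$ represents $(N(Q_x)+m)^d$ modulo $Q_x$. Then count only complete blocks: each $a\in\Lambda_Q$ is hit at least $H/(4|\Lambda_Q|)$ times when $H\ge4Q$, by the root count $d^{\omega(Q)}$ and $\varphi(Q)/Q\ge\frac12$. This yields $A^{t^d,c}_{N(Q_x)}f_h(x)\ge\frac14X_h(x)$ with no $\|f_h\|_\infty$ term at all.

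The constant $C$ is then beaten by Chebyshev's inequality for the pairwise independent $X_h$, together with $\mathbb E f_h\le e^4$. Your proposal leaves this step buried in the black box.
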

Proposition \ref{p:finite-weak-failure} can readily be seen to imply 
Theorem~\ref{t:short-l1-bad}  by the standard Sawyer-Conze reduction used
in \cite{BM,L9}. The proof's architecture has two components. The first component is intrinsic to 
LaVictoire's inductive construction \cite[Section~4]{L9} which we recycle. The output is a family of non-negative functions \(f_h\),
auxiliary random variables \(X_h\), and stopping time, $Q_x$,
with the property that
\[
\frac1{|\Lambda_{Q_x}|}
\sum_{a\in\Lambda_{Q_x}} f_h(x+a)
\geq X_h(x).
\]
The random variables $X_h$'s are pairwise independent and have large mean, while the
$f_h$'s have controlled \(L^1\)-norm. The second
component is external: we establish a lower bound for the
short-interval maximal operator which is exactly where our setting differs from LaVictoire's.

\bigskip

Following LaVictoire we define the residue sets used throughout the argument. To this end, due to Dirichlet's theorem on primes in arithmetic progressions we may choose an infinite
set of primes \(\mathcal P_d\), all congruent to \(1\pmod d\), so sparse that $\sum_{p\in\mathcal P_d}p^{-1} \leq \frac{1}{10}$ and therefore

\begin{equation} \label{eq:sparseprimes}
	\prod_{p\in\mathcal P_d}\left(1-\frac1p\right)\geq \frac12.
\end{equation}

Let \(\mathcal Q\) denote the set of squarefree products of primes from
\(\mathcal P_d\). For \(Q\in\mathcal Q\), define
\[
\Lambda_Q
:=
\{a\in \mathbb Z/Q\mathbb Z:
(a,Q)=1
\text{ and } a\equiv r^d\pmod Q
\text{ for some } r\in\mathbb Z\}.
\]
That is,  $\Lambda_Q$ is the set of invertible $d$-th-power residues modulo $Q$.

We record the properties that will replace LaVictoire's use of the
ordinary initial averages.
\begin{lemma}\label{l:root-count}
Let \(Q\in\mathcal Q\), and write \(\omega(Q)\) for the number of prime factors
of \(Q\). Then
\[
|\Lambda_Q|=\frac{\varphi(Q)}{d^{\omega(Q)}}.
\]
Moreover, every \(a\in\Lambda_Q\) has exactly \(d^{\omega(Q)}\) \(d\)-th roots
modulo \(Q\); that is,
\[
\#\{r\in\mathbb Z/Q\mathbb Z:r^d\equiv a\pmod Q\}
=
d^{\omega(Q)}.
\]
\end{lemma}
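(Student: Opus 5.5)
The plan is to reduce everything to the prime factors of \(Q\) via the Chinese remainder theorem, and then to use the cyclic structure of \((\mathbb Z/p\mathbb Z)^\times\) for each prime \(p\in\mathcal P_d\).

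Since \(Q\in\mathcal Q\) is squarefree, write \(Q=p_1\cdots p_k\) with distinct \(p_i\in\mathcal P_d\) and \(k=\omega(Q)\). The Chinese remainder theorem gives a ring isomorphism
\[
\mathbb Z/Q\mathbb Z\cong\prod_{i=1}^k\mathbb Z/p_i\mathbb Z .
\]
It identifies units with tuples of units. It also identifies the map \(r\mapsto r^d\) with the coordinatewise \(d\)-th power map. Hence \(\Lambda_Q\) corresponds to \(\prod_i\Lambda_{p_i}\). For \(a\in\Lambda_Q\), the number of \(d\)-th roots of \(a\) modulo \(Q\) is the product over \(i\) of the number of \(d\)-th roots of \(a\bmod p_i\) modulo \(p_i\).

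A root \(r\) of a unit \(a\) is automatically a unit, so all roots lie in the unit group. It therefore suffices to treat a single prime \(p\in\mathcal P_d\). The group \((\mathbb Z/p\mathbb Z)^\times\) is cyclic of order \(p-1\), and \(d\mid p-1\) because \(p\equiv1\pmod d\). Consider the homomorphism \(\phi\colon x\mapsto x^d\) on this group. Its kernel consists of the solutions of \(x^d=1\) in a cyclic group of order \(p-1\), which has exactly \(\gcd(d,p-1)=d\) elements. Consequently:
\begin{itemize}
\item the image \(\Lambda_p\) has size \((p-1)/d\);
\item every element of the image has exactly \(d\) preimages, namely a coset of \(\ker\phi\).
\end{itemize}

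Multiplying over the \(k\) primes gives
\[
|\Lambda_Q|=\prod_i\frac{p_i-1}{d}=\frac{\varphi(Q)}{d^{\omega(Q)}},
\]
and shows that each \(a\in\Lambda_Q\) has exactly \(d^{\omega(Q)}\) \(d\)-th roots modulo \(Q\). Equivalently, \(\Lambda_Q\) is the image of the homomorphism \(x\mapsto x^d\) on \((\mathbb Z/Q\mathbb Z)^\times\), whose kernel has size \(d^{\omega(Q)}\).

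No step is a real obstacle. The only point needing care is that the roots of a unit are units, so the count is a count of fibres of the homomorphism inside \((\mathbb Z/Q\mathbb Z)^\times\). This holds because any \(r\) with \(r^d\equiv a\) and \((a,Q)=1\) satisfies \((r,Q)=1\).
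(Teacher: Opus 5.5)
Your proposal is correct and follows essentially the same route as the paper: the same cyclic-group kernel count at each prime $p\equiv 1 \pmod d$, assembled over the prime factors via the Chinese remainder theorem. The one addition is that you explicitly check that every $d$-th root of a unit is itself a unit, so the count over all of $\mathbb Z/Q\mathbb Z$ equals the fibre size of the homomorphism on $(\mathbb Z/Q\mathbb Z)^\times$; the paper leaves this step implicit.
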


\begin{remark}
The set \(\Lambda_Q\) is the reduced \(d\)-th-power residue set modulo \(Q\),
namely the image of \(r\mapsto r^d\) on
\((\mathbb Z/Q\mathbb Z)^\times\). This is the same convention used in
LaVictoire's construction, where the Granville--Kurlberg spacing input is
applied to reduced residue sets.
\end{remark}

\begin{proof}[Proof of Lemma \ref{l:root-count}]
	First suppose $Q=p\in\mathcal P_d$. Since $p\equiv1\pmod d$, the group
	$(\mathbb Z/p\mathbb Z)^\times$ is cyclic of order divisible by $d$. The map $	r\mapsto r^d$ is a homomorphism from \((\mathbb Z/p\mathbb Z)^\times\) to itself. Its kernel
	has exactly \(d\) elements, and therefore its image has cardinality \((p-1)/d\).
	Consequently every invertible \(d\)-th-power residue modulo \(p\) has exactly
	\(d\) roots.
	
	For squarefree \(Q=p_1\cdots p_\kappa\), the Chinese remainder theorem gives
	\[
	(\mathbb Z/Q\mathbb Z)^\times
	\simeq
	\prod_{i=1}^{\kappa}(\mathbb Z/p_i\mathbb Z)^\times.
	\]
	The \(d\)-th-power map acts coordinatewise. Hence, its kernel has size
	\(d^\kappa\), and its image has size $
	\frac{\varphi(Q)}{d^\kappa}$
and the proof is complete.
\end{proof}The next lemma is the key short-interval replacement. It says that once the
short interval has length much larger than \(Q\), it witnesses every residue
\(a\in\Lambda_Q\) with a uniform lower bound.

\begin{lemma}[short-block residue count]\label{l:short-block-count}
Let \(Q\in\mathcal Q\), let \(U\in\mathbb Z\), and let \(H\geq 4Q\). Then,
for every \(a\in\Lambda_Q\),
\[
\#\{1\leq m\leq H:(U+m)^d\equiv a\pmod Q\}
\geq
\frac{H}{4|\Lambda_Q|}.
\]
\end{lemma}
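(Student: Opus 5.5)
We reduce the count to complete residue classes modulo \(Q\) and then invoke Lemma~\ref{l:root-count}. Fix \(a\in\Lambda_Q\). Whether \((U+m)^d\equiv a\pmod Q\) holds depends only on the residue of \(U+m\) modulo \(Q\). As \(m\) runs over \(1\le m\le H\), the values \(U+m\) form \(H\) consecutive integers. Write \(H=kQ+s\) with \(k=\floor{H/Q}\) and \(0\le s<Q\). The first \(kQ\) values of \(U+m\) then split into \(k\) complete residue systems modulo \(Q\). We simply discard the remaining \(s\) terms, which is legitimate since we only need a lower bound.

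In each complete residue system, the number of \(r\in\mathbb Z/Q\mathbb Z\) with \(r^d\equiv a\pmod Q\) equals \(d^{\omega(Q)}\), by the second assertion of Lemma~\ref{l:root-count}. Hence
\[
\#\{1\le m\le H:(U+m)^d\equiv a\pmod Q\}\ \ge\ k\,d^{\omega(Q)}=\floor{H/Q}\,\frac{\varphi(Q)}{|\Lambda_Q|},
\]
where the equality uses the first assertion \(|\Lambda_Q|=\varphi(Q)/d^{\omega(Q)}\).

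It remains to show that \(\floor{H/Q}\,\varphi(Q)\ge H/4\). Since \(H\ge 4Q\), we have \(H/Q\ge 4\), so \(\floor{H/Q}\ge H/Q-1\ge \tfrac34\,H/Q\). For the totient we use that \(Q\) is a squarefree product of primes from \(\mathcal P_d\), together with \eqref{eq:sparseprimes}:
\[
\frac{\varphi(Q)}{Q}=\prod_{p\mid Q}\Bigl(1-\frac1p\Bigr)\ \ge\ \prod_{p\in\mathcal P_d}\Bigl(1-\frac1p\Bigr)\ \ge\ \frac12 .
\]
Combining the two bounds gives \(\floor{H/Q}\,\varphi(Q)\ge \tfrac34\cdot\tfrac12\,H=\tfrac38 H\ge \tfrac14 H\). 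Dividing by \(|\Lambda_Q|\) yields the claimed bound \(H/(4|\Lambda_Q|)\).

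The argument has no genuinely hard step. The only point needing care is the constant: the totient loss must be controlled uniformly in \(Q\), and this is exactly what \eqref{eq:sparseprimes}, i.e. the sparseness of \(\mathcal P_d\), provides. The case \(Q=1\) is degenerate but consistent with the statement.
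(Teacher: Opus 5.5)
Your proof is correct and is essentially the paper's argument. Both count $\floor{H/Q}$ complete residue blocks, take the $d^{\omega(Q)}$ roots per block from Lemma~\ref{l:root-count}, and use \eqref{eq:sparseprimes} to get $\varphi(Q)/Q\geq 1/2$. The only difference is the floor estimate: the paper uses $\floor{H/Q}\geq H/(2Q)$, which lands exactly on $H/(4|\Lambda_Q|)$, while your $\floor{H/Q}\geq\tfrac34 H/Q$ leaves a little room to spare.
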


\begin{proof}
Note that
 \[
\begin{split}
	 &\#\left\{1\leq m\leq H:(U+m)^d\equiv a\pmod Q\right \} \\
     & \geq \floor{\frac{H}{Q}} \# \left\{m \in \Z / Q \Z : m^d \equiv a \pmod{Q}\right\}   \geq \frac{H}{2Q}d^{\omega(Q)} \ge \frac{H}{4\left|\Lambda_Q\right|},
\end{split} \] where the passage to the last inequality is possible to due to  Euler product identity \[\frac{\varphi(Q)}{Q}=\prod_{p \mid Q} \left(1-\frac{1}{p}\right)\] and \eqref{eq:sparseprimes}.
\end{proof}

\begin{remark}
Lemma \ref{l:short-block-count} counts only the residue classes in the reduced set \(\Lambda_Q\).
This is in our application.  Indeed, the averages will be along
\((U+m)^d\), which may also fall into non-invertible residue classes modulo
\(Q\). However, all functions are non-negative, so those terms may simply be
discarded.  As we shall use below, the prime factors of \(Q\) are chosen from a sparse set
\(\mathcal P_d\) for which $\varphi(Q)/Q \ge  1/2$.
Thus restricting to reduced residue classes changes the lower bound in
Lemma~\ref{l:short-block-count} only by an absolute constant.
\end{remark}

\bigskip

We now prepare the control function which will replace LaVictoire's
initial-average control function \(\psi\).
We refine the family of moduli from the previous section so
that it is compatible with LaVictoire's construction for \(d\)-th powers.

Choose pairwise coprime integers
\[
p_1,p_2,\dots,
\qquad
q_1,q_2,\dots,
\]
where each \(p_j\) is a prime in \(\mathcal P_d\), and each \(q_j\) is a
squarefree product of primes from \(\mathcal P_d\), with $p_i,\ q_j$
pairwise coprime as integers and with
\[
\omega(q_j)\to\infty .
\]
This can be done by choosing disjoint finite subsets of the sparse prime set
\(\mathcal P_d\). For every squarefree product \(Q\) formed from the \(p_j\)'s
and \(q_j\)'s, its prime factors are still contained in \(\mathcal P_d\). Henceforth, standard Euler product identities and elementary inequalities yield
\[
\frac{\varphi(Q)}{Q}
=
\prod_{p\mid Q}\left(1-\frac1p\right)
\geq
\prod_{p\in\mathcal P_d}\left(1-\frac1p\right)
\geq
\frac12.
\]
In particular,
\[
\frac{Q}{\varphi(Q)}\leq 2.
\]

For the rest of the proof, we restrict \(\mathcal Q\) to the subfamily of
squarefree products
\[
Q=p_{i_1}\cdots p_{i_k}q_{j_1}\cdots q_{j_\ell},
\qquad
i_1<\cdots<i_k,\quad j_1<\cdots<j_\ell .
\]
We continue to denote this restricted family by \(\mathcal Q\). This is the
product family used in \cite[Proposition~4.1]{L9}. Since it is a subfamily of
the moduli from the previous section, all residue-counting lemmas remain valid
for every \(Q\in\mathcal Q\).

For each \(Q\in\mathcal Q\), choose \(N(Q)\in\mathcal N\) so large that
\[
\floor{N(Q)^c}\geq 4Q
.
\]
This is possible because \(\mathcal N\) is infinite and hence unbounded.

Define the short-interval control function \(\Psi\) by
\[
\Psi(u)
:=
\ceiling{u}
+
\max_{\substack{Q\in\mathcal Q\\ Q\leq u}}
\bigl(N(Q)+ \floor{N(Q)^c} \bigr)^d,
\qquad u\geq 1,
\]
where the maximum over the empty set is interpreted as \(0\). Thus \(\Psi\) is
finite and non-decreasing. Moreover, for every \(Q\in\mathcal Q\), every
\(A\geq1\), and every \(1\leq m\leq H_{N(Q)}\), we have
\[
Q
\leq
\bigl(N(Q)+m\bigr)^d
\leq
\bigl(N(Q)+\floor{N(Q)^c}\bigr)^d
\leq
\Psi(Q)
\leq
\Psi(AQ).
\]
This is the only role of \(\Psi\): it records how far local \(Q\)-periodicity
must persist in order to cover the short interval attached to \(Q\).

We next state the precise form of LaVictoire's inductive output which we shall
use. The proposition below is the terminal case \(L=M\) of
\cite[Proposition~4.1]{L9}, adapted to the present short-interval setting by
replacing LaVictoire's control function \(\psi\) with the function \(\Psi\)
defined above. We do not reproduce the full leakage construction. However, since
the replacement of \(\psi\) by \(\Psi\) is the only place where the present
argument differs from LaVictoire's, we verify that the
induction in \cite[Sections~4--8]{L9} is stable under this replacement.

First recall LaVictoire's auxiliary distribution. For \(0<\gamma<1\) and
\(0<\alpha<1\), define random variables \(Y_{n,\gamma,\alpha}\) recursively by
\[
Y_{0,\gamma,\alpha}\equiv 1,
\]
and
\[
Y_{n+1,\gamma,\alpha}(x_0,\dots,x_{n+1})
:=
(1-\gamma)^{-1}
Y_{n,\gamma,\alpha}(x_0,\dots,x_n)
1_{[\gamma,1)}(x_{n+1})
+
\alpha 1_{[0,\alpha\gamma)}(x_{n+1}).
\]
Thus
\[
\mathbb E Y_{n,\gamma,\alpha}
=
1+n\alpha^2\gamma,
\]
and, for the range of \(\gamma\) used in the construction,
\[
\mathbb E Y_{n,\gamma,\alpha}^2
\leq
2(1-\gamma)^{-n}.
\]

\begin{proposition}
	\label{p:lavictoire-short-control}
	Fix \(0<\gamma<\gamma_0\), with \(\gamma\) dyadic, and let \(K,M\in\mathbb N\).
	Let \(A\geq1\), let $\delta=o(\gamma)$, and let \(D\) be an odd integer. Then there
	exist a finite cyclic group \(\mathbb Z_{\mathcal T}\), non-negative functions $	f_1,\dots,f_K:\mathbb Z_{\mathcal T}\to[0,\infty),$ random variables $	X_1,\dots,X_K:\mathbb Z_{\mathcal T}\to[0,\infty),$ an exceptional set \(E\subseteq\mathbb Z_{\mathcal T}\), and a map $	Q_x:\mathbb Z_{\mathcal T}\to\mathcal Q$ such that the following properties hold:  \begin{itemize}
		\item $	\mathbb P(E)\leq \delta $
		\item  $	1\leq \mathbb E f_h\leq (1+4\varepsilon)^{KM},$ where \(\varepsilon=\varepsilon_\gamma=o(\gamma)\) is chosen sufficiently small and $100\delta < \varepsilon$, for each \(1\leq h\leq K\);
		\item The random variables \(X_1,\dots,X_K\) are pairwise independent and
		\[
		X_h\stackrel{d}{=}Y_{M,\gamma,\alpha}
		\qquad
		\text{for every }1\leq h\leq K;
		\]
		\item for every \(x\notin E\), the modulus \(Q_x\) divides \(\mathcal T\), and
		for every \(1\leq h\leq K\),
		\begin{align}\label{e:rdomest}
		    \frac1{|\Lambda_{Q_x}|}  \sum_{\substack{ a\in\Lambda_{Q_x}\\1\leq a\leq Q_x}}
		f_h(x+a)
		\geq
		X_h(x);
		\end{align}
		\item  the functions \(f_h\) are locally \(Q_x\)-periodic near \(x\) up to the
		short-interval control scale: 	
        \begin{align}\label{e:LPIdentity}
            f_h(x+y-Q_x)=f_h(x+y) \text{ whenever } 	1\leq h\leq K,
		\
		x\notin E,
		\; 
		Q_x\leq y\leq \Psi(AQ_x).
        \end{align}

	\end{itemize}

\end{proposition}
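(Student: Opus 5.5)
The plan is not to rebuild LaVictoire's construction. We take \cite[Proposition~4.1]{L9} in its terminal case $L=M$ as a black box, with its control function $\psi$ replaced by $\Psi$, and check that each step of the induction in \cite[Sections~4--8]{L9} uses $\psi$ only through properties that $\Psi$ also has. Concretely, we first list where $\psi$ enters LaVictoire's argument. It is needed only to be finite, non-decreasing, and to satisfy $\psi(u)\geq u$. It fixes the range $Q_x\leq y\leq\psi(AQ_x)$ on which the functions must be locally $Q_x$-periodic. Finally, it fixes how large the next cyclic group $\mathbb Z_{\mathcal T}$ must be taken, so that the exceptional set where this periodicity fails has density $o(\delta)$. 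Our $\Psi$ is finite, since $\mathcal Q\cap[1,u]$ is finite. It is non-decreasing and satisfies $\Psi(u)\geq\lceil u\rceil$. So all qualitative hypotheses carry over unchanged.

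Next we go through the induction on $h$ and on the level $n\le M$. At each stage LaVictoire picks a fresh modulus (a $p_j$ or a $q_j$) coprime to all earlier ones. He defines $f_h$ at the new stage by periodizing the previous function with period the new modulus. He then passes to a cyclic group of order $\mathcal T'$, a multiple of the product of all moduli used, and large compared with $\psi(A\cdot\text{(all moduli)})$. The exceptional set consists of those $x$ within distance $\psi(AQ_x)$ of the ``seams'' where periodic blocks meet. Its measure is at most $\sum_Q \psi(AQ)/\mathcal T'$, and this is below $\delta$ once $\mathcal T'$ is large. We repeat this verbatim with $\Psi$, enlarging $\mathcal T'$ as needed. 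The remaining bullets do not involve $\psi$ at all, so they are inherited with the same proofs. These are the bound $1\le\mathbb E f_h\le(1+4\varepsilon)^{KM}$, which comes from leakage bookkeeping, the pairwise independence and the distribution $X_h\stackrel d=Y_{M,\gamma,\alpha}$, which come from the CRT independence of residues modulo coprime moduli, and the residue-average lower bound \eqref{e:rdomest}. The one input that depends on the reduced residue sets is the Granville--Kurlberg spacing for $\Lambda_Q$. This is already LaVictoire's setting for $d$-th powers, with $\mathcal P_d$ primes $\equiv1\pmod d$, and Lemma~\ref{l:root-count} supplies the needed counts.

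We expect the main obstacle to be bookkeeping. Replacing $\psi$ by the much faster-growing $\Psi$ must not create a circular dependence: the size of $\mathcal T$ must never need to be fixed before a modulus whose $\Psi$-value it has to dominate. We will handle this by noting that in \cite{L9} the moduli at each stage are chosen first, and only then is the ambient period chosen, as a function of those moduli. Because $\Psi$ is an explicit function of $Q$, we can always take $\mathcal T$ to be a multiple of the current moduli exceeding $\delta^{-1}\sum\Psi(AQ)$. Also $\delta=o(\gamma)$ is chosen independently of $\psi$. So the ordering of choices is unaffected, and the terminal case gives exactly the stated properties, including \eqref{e:LPIdentity} on the extended range $[Q_x,\Psi(AQ_x)]$.
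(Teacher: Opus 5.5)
Your overall strategy is the paper's: treat \cite[Proposition~4.1]{L9} as robust under $\psi\mapsto\Psi$ and check where the numerical size of the control function enters. But the mechanism you give for that check is wrong, and following it would fail.

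\textbf{Enlarging the ambient group does not shrink the exceptional set.} You claim the exceptional set has density at most $\sum_Q\Psi(AQ)/\mathcal T'$, and that it becomes small by enlarging the ambient cyclic group after the moduli are fixed. In LaVictoire's induction, the two exceptional sets where the size of $\Psi$ enters are periodic with period dividing $pqT_L$. Their densities therefore do not improve when the ambient period $T_{L+1}=ST_Lpq$ is enlarged.

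The first set is $E_L^1(\omega)$. It consists of points within $\Psi(AT_L)$ of the seams of the $p$-periodic rearrangement of \cite[Section~5]{L9}. These seams are spaced $T_L\lfloor\sqrt p\rfloor$ apart, so its density is about $(\Psi(AT_L)+2T_L)/(T_L\sqrt p)$.

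The second set is $E_L^2$. It consists of points of the leakage set $\Delta_q$ from which one leaves $\Delta_q$ within distance $\Psi(AQ_{x,L})$. It is controlled only by
\[
\mathbb P\bigl(\Delta_q\cap(-\Lambda_q)^\gamma\bigr)+\bigl(\lfloor\Psi(AT_L)\rfloor+1\bigr)\mathbb P(\Lambda_q).
\]
If $q$ is not large relative to $\Psi(AT_L)$, this set can be most of $\Delta_q$. For instance, it equals $\Delta_q$ as soon as $\Psi(AQ_{x,L})\geq q$ for all $x$. Since $\mathcal N$ is arbitrary, $\Psi$ can be enormous, so this is exactly the regime that matters.

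\textbf{The order of choices must be reversed.} The non-circularity argument runs the opposite way from the one you describe. $T_L$ is fixed at the previous stage, so $\Psi(AT_L)$ is a fixed finite number. Only then are the fresh prime $p$ and auxiliary modulus $q$ chosen:
\begin{itemize}
\item $p$ so large that $(\Psi(AT_L)+2T_L)/(T_L\sqrt p)\leq\delta/4$;
\item $q$ so far out in $(q_j)$ that the Granville--Kurlberg condition holds and also $(\lfloor\Psi(AT_L)\rfloor+1)\mathbb P(\Lambda_q)\leq\delta/8$.
\end{itemize}
The condition on $q$ can be met because $\mathbb P(\Lambda_{q_j})\leq d^{-\omega(q_j)}\to0$. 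This is where the hypothesis $\omega(q_j)\to\infty$ is used.

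\textbf{The restriction step also involves the range.} You say local periodicity is otherwise inherited, but this skips the restriction step \cite[Lemma~7.1]{L9} for the $(K-1)$-family placed off $\Delta_q$. After restriction, the stopping modulus is $\bar Q'_x=qT_Lp\,Q'_x$. The required range $[\bar Q'_x,\Psi(A\bar Q'_x)]$ is covered only because the inductive family is invoked with the enlarged parameter $A_L=AT_Lpq$. Then $\Psi(A\bar Q'_x)=\Psi(A_LQ'_x)$, which is exactly the range that family guarantees. Periodicity across the boundary of $\Delta_q$ then uses three facts: $pqT_L\mid\bar Q'_x$, $\Delta_q$ is $q$-periodic, and $\widetilde f_h^L$ is $pT_L$-periodic.

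With these corrections your outline becomes the paper's argument.
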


\begin{proof}
	We prove the proposition by adapting LaVictoire's proof of
\cite[Proposition~4.1]{L9}. LaVictoire's Proposition does not directly apply to the
present short-interval averages. What we use is the robustness of his inductive
construction: the function \(\psi\) appears only as the finite range in the
local-periodicity condition. Since our function \(\Psi\) is also finite and
non-decreasing, the same construction works with \(\Psi\) in place of \(\psi\).
We now check this replacement in the induction.

We now fix the constant \(\alpha\) used in the auxiliary distribution
\(Y_{n,\gamma,\alpha}\). By the Granville--Kurlberg spacing theorem, in the
form used in \cite[(3.4)]{L9}, we may choose constants
\(0<\alpha<1\) and \(0<\gamma_0<1\) such that, for every dyadic
\(0<\gamma<\gamma_0\), the auxiliary moduli \(q=q_j\) may be taken sufficiently
far out so that
\[
\frac1{|\Lambda_q|^2}
\#\Bigl\{
(u,v)\in\Lambda_q^2:
\operatorname{dist}(u-v,\Lambda_q)>\gamma s_q
\Bigr\}
>
5\alpha .
\tag{GK}
\]
It is independent of
\(K,M,A,\delta,D\); only the choice of how far out one takes \(q=q_j\) may depend
on these later parameters.

Let us write \(\operatorname{Step}_{\Psi}(K,M,L)\) for the statement of
\cite[Proposition~4.1]{L9} at the internal stage \(L\), but with the
local-periodicity conclusion
\[
f_h(x+y-Q_x)=f_h(x+y),
\qquad
Q_x\leq y\leq \psi(AQ_x),
\]
replaced by
\[
f_h(x+y-Q_x)=f_h(x+y),
\qquad
Q_x\leq y\leq \Psi(AQ_x).
\]
All other conclusions are kept as in LaVictoire's stage-\(L\) statement. In
particular, the last variable has distribution \(Y_{L,\gamma,\alpha}\), and the
stage-\(L\) \(\ell^1\)-bound is
\[
\mathbb E f_h\leq (1+4\varepsilon)^{(K-1)M+L}.
\]
Thus the required proposition is the terminal case
\(\operatorname{Step}_{\Psi}(K,M,M)\).
	
It remains to check the induction step. Assume that \(L<M\), that
\(\operatorname{Step}_{\Psi}(K,M,L)\) is known, and that all previous steps are
known for all parameter values \(A,\delta,D\), with \(\gamma\) fixed. Fix
\(A,\delta,D\). As in \cite[Section~8]{L9}, apply
\(\operatorname{Step}_{\Psi}(K,M,L)\) with parameters \(A,\delta/4,D\). In the
inductive notation, the ambient cyclic period \(\mathcal T\) at stage \(L\) is
written as \(T_LR_L\). Thus we obtain
\[
T_L,\ R_L,\ f_1^L,\dots,f_K^L,\ X_1^L,\dots,X_K^L,\ E_L,\ Q_{x,L}.
\]
	The construction then chooses a new prime \(p\) and a new auxiliary modulus
\(q\), and performs the \(p\)-periodic rearrangement from
\cite[Section~5]{L9}. This produces the rearranged family
\[
\widetilde f_1^L,\dots,\widetilde f_K^L,
\qquad
\widetilde X_1^L,\dots,\widetilde X_K^L,
\qquad
\widetilde Q_{x,L}.
\]
It then introduces the leakage sets \(\Delta_q\) and \(\Psi_q\) from
\cite[Section~6]{L9}. We recall their definitions and properties for ease of
reference. Here \(\Psi_q\) is LaVictoire's leakage set and should not be confused
	with the global control function \(\Psi\).
	Write
	\[
	s_q:=\mathbb P(\Lambda_q)^{-1}=\frac{q}{|\Lambda_q|}.
	\]
	The following set is useful in the proof: 
	\[
	\Phi_q
	:=
	\left\{
	u\in-\Lambda_q:
	\#\left\{
	v\in\Lambda_q:
	u+v\notin -\Lambda_q+(-\gamma s_q,\gamma s_q)
	\right\}
	\geq
	2\alpha|\Lambda_q|
	\right\}.
	\]
	By the spacing theorem of Granville--Kurlberg \cite[Corollary 2]{GK08}, in the form used by
	LaVictoire in \cite[(3.4)]{L9}, the auxiliary moduli \(q=q_j\) may be chosen so
	that the normalized spacings of \(\Lambda_q\) are sufficiently close to the
	Poisson model. In particular, for the present value of \(\alpha\), we may choose
	\(q\) so large that$	|\Phi_q|\geq 3\alpha|\Lambda_q|.$ Then set $	\Phi_q^\gamma:=\Phi_q+(0,\gamma s_q)\subseteq (-\Lambda_q)^\gamma .$ The sets \(\Psi_q,\Delta_q\subseteq\mathbb Z_q\) are chosen so that the following properties are satisfied:
	
\begin{itemize}
	 \item $\Psi_q\subseteq\Phi_q^\gamma\setminus\Delta_q,$
	\item
	$\mathbb P(\Psi_q)\geq \alpha\gamma,$
	
\item 
$	\mathbb Z_q\setminus(-\Lambda_q)^\gamma
	\subseteq
	\Delta_q
	\subseteq
	\mathbb Z_q\setminus\Phi_q,
	$

\item
$	\mathbb P(\Delta_q)
	<
	1-\mathbb P(\Lambda_q^\gamma)+\delta/8,
	$
\item 

$	(|\Psi_q|,D)=(|\Delta_q|,D)=1.
$

\end{itemize}
	The last coprimality condition can be imposed because \(q\) is chosen much
	larger than \(D\). The key consequence of these definitions is that
	\[
	\#\{v\in\Lambda_q:x+v\in\Delta_q\}
	\geq
	2\alpha|\Lambda_q|
	\qquad
	\text{for every }x\in\Psi_q.
	\]
	These choices and definitions are unchanged in the present argument, except for
	one harmless strengthening of the choice of \(q\), described below.
	
	The only places before \cite[Section~8]{L9} where the numerical size of \(\psi\) enters are
	the exceptional sets denoted $E_L^1$ in \cite[(5.2)]{L9} and  \(E_L^2\) in \cite[(6.9)]{L9}. In the present
	argument we define instead
	\[\begin{split}& E_L^1(\omega):= \tilde E^\omega \cup\bigcup_{0\leq i\leq\frac{\sqrt p}{T_L} -1} [iT_L\lfloor\sqrt p\rfloor -\Psi(AT_L),iT_L\lfloor\sqrt p\rfloor]+p\Z
	\\ &	E_L^2
\coloneqq 
	\{x\in\Delta_q:\exists\,0<y\leq \Psi(AQ_{x,L})
	\text{ such that }x+y\notin\Delta_q\}.
\end{split}
	\]
	Since \(Q_{x,L}\mid T_L\), we have \(Q_{x,L}\leq T_L\). Since \(\Psi\) is
	non-decreasing $	\Psi(AQ_{x,L})\leq \Psi(AT_L).$ 	Hence,
	\[
	E_L^2
	\subseteq
	\bigl(\Delta_q\cap(-\Lambda_q)^\gamma\bigr)
	\cup
	\bigl(-\Lambda_q+(-\Psi(AT_L),0]\bigr).
	\]
	At this stage \(T_L\) has already been fixed, and therefore \(\Psi(AT_L)\) is a
	finite number. From the inclusion above we get
	\[
	\mathbb P(E_L^2)
	\leq
	\mathbb P\bigl(\Delta_q\cap(-\Lambda_q)^\gamma\bigr)
	+
	\mathbb P\bigl(-\Lambda_q+(-\Psi(AT_L),0]\bigr).	\]
	The first term is estimated exactly as in \cite[(6.9)]{L9}; by choosing \(q\)
	as in LaVictoire's argument, it is at most \(\delta/8\). Indeed,
    $$\mathbb P\bigl(\Delta_q\cap(-\Lambda_q)^\gamma\bigr) = \mathbb P\bigl(\Delta_q\bigr)+\mathbb P\bigl((-\Lambda_q)^\gamma\bigr) - \mathbb P\bigl(\Delta_q\cup(-\Lambda_q)^\gamma\bigr) < 1+\delta/8  - \mathbb P\bigl(\Delta_q\cup(-\Lambda_q)^\gamma\bigr)= \delta/8.$$
    For the second term,
	the set
	\[
	-\Lambda_q+(-\Psi(AT_L),0]
	\]
	is contained in the union of at most \(\lfloor \Psi(AT_L)\rfloor+1\) translates
	of \(-\Lambda_q\). Hence
	\[
	\mathbb P\bigl(-\Lambda_q+(-\Psi(AT_L),0]\bigr)
	\leq
	(\lfloor \Psi(AT_L)\rfloor+1)\mathbb P(\Lambda_q).
	\]
	Since the auxiliary moduli \(q=q_j\) may be chosen arbitrarily far out in
	LaVictoire's sequence, and since \(\mathbb P(\Lambda_q)\to0\), we may choose
	\(q\) so large that
	\[
	(\lfloor \Psi(AT_L)\rfloor+1)\mathbb P(\Lambda_q)
	\leq
	\frac{\delta}{8}.
	\]
	Therefore
	\[
	\mathbb P(E_L^2)\leq \frac{\delta}{8}+\frac{\delta}{8}
	\leq
	\frac{\delta}{4}. 
	\] 
    Similarly, as in \cite[Section~5]{L9}, after choosing \(p\) sufficiently large
we have
\[
\mathbb P(E_L^1(\omega))
\leq
\mathbb P(E_L)
+
\frac{\Psi(AT_L)+2T_L}{T_L\sqrt p}.
\]
Since the previous-stage family was chosen with exceptional-set parameter
\(\delta/4\), we have $\mathbb P(E_L)\leq \frac{\delta}{4}.$
Choosing \(p\) large enough so that
\[
\frac{\Psi(AT_L)+2T_L}{T_L\sqrt p}
\leq
\frac{\delta}{4},
\]
we obtain
\[
\mathbb P(E_L^1(\omega))\leq \frac{\delta}{2}.
\]
We now use the restriction step from \cite[Lemma~7.1]{L9}. Put $A_L:=AT_Lpq$ and $D_L:=DT_Lpq.$
By the strong inductive hypothesis, take a
\(\operatorname{Step}_{\Psi}(K-1,M,M)\) family with parameters
\(A_L,\delta/4,D_L\). Denote this family by
\[
g_1,\dots,g_{K-1},
\qquad
Z_1,\dots,Z_{K-1},
\qquad
E',
\qquad
Q'_x.
\]
Applying \cite[Lemma~7.1]{L9} with \(B=T_Lp\) restricts this family to
\((-\Lambda_q)^\gamma\). We obtain
\[
\bar g_1,\dots,\bar g_{K-1},
\qquad
\bar Z_1,\dots,\bar Z_{K-1},
\qquad
\bar E',
\qquad
\bar Q'_x.
\]
The conclusions of \cite[Lemma~7.1]{L9} give
\[
\mathbb P(\bar E')\leq \delta/4,
\]
and, for every \(1\leq h\leq K-1\),
\[
\mathbb E\bar g_h
\leq
\gamma\,\mathbb E g_h
\leq
\gamma(1+4\varepsilon)^{(K-1)M}
\leq
\gamma(1+4\varepsilon)^{(K-1)M+L}.
\]
The same lemma gives the conditional distribution identities for the variables
\(\bar Z_h\) on \(q\)-periodic subsets of \((-\Lambda_q)^\gamma\), in the form
used in the recombination argument \cite[(8.7)]{L9}\footnote{We do not need, and do not
claim, global pairwise independence of the restricted variables.}.  Finally, the
restriction lemma gives the residue domination estimate
\[
\frac1{|\Lambda_{\bar Q'_x}|}
\sum_{a\in\Lambda_{\bar Q'_x}}
\bar g_h(x+a)
\geq
\bar Z_h(x),
\qquad
x\notin\bar E',
\quad
1\leq h\leq K-1.
\]

We only need to check that the local-periodicity conclusion remains valid with
\(\Psi\) in place of LaVictoire's \(\psi\). In the present application, the
input family for \cite[Lemma~7.1]{L9} has parameter $A_L:=AT_Lpq,$
and the lemma is applied with \(B=T_Lp\). As in LaVictoire's notation, the
restricted family then has output parameter $\bar A_L=A.$

The new stopping modulus is
\[
\bar Q'_x=qBQ'_x=qT_Lp\,Q'_x.
\]
In particular, $pqT_L\mid \bar Q'_x.$
Moreover, $\bar A_L\bar Q'_x
=
AqT_Lp\,Q'_x
=
A_LQ'_x.$
Therefore the upper endpoint in the local-periodicity range is unchanged:
\[
\Psi(\bar A_L\bar Q'_x)
=
\Psi(A_LQ'_x).
\]
Thus the proof of \cite[Lemma~7.1]{L9} applies with \(\Psi\) exactly as it does
with \(\psi\): the lemma only uses the fact that the same finite control range is
preserved under the parameter change. Hence the restricted family satisfies the
same conclusions as in \cite[Lemma~7.1]{L9}, with \(\Psi\) in the
local-periodicity condition.
	
	Let \(S\) be the auxiliary integer chosen in \cite[Section~8]{L9}. Define,
exactly as in \cite[(8.1)--(8.5)]{L9},
\begin{align}
    T_{L+1}&:=ST_Lpq,\\
	E_{L+1}&:=E_L^1(\omega)\cup E_L^2\cup \bar E',\\
	Q_{x,L+1}
	&:=
	\begin{cases}
		\widetilde Q_{x,L},& x\in\Delta_q,\\
		\bar Q'_x,& x\notin\Delta_q,
	\end{cases}
\end{align}
	and, for \(1\leq h\leq K-1\),
	\begin{align}\label{e:L+1def}
	    f_h^{L+1}
	:=
	\widetilde f_h^L1_{\Delta_q}
	+
	\bar g_h1_{\mathbb Z\setminus\Delta_q},
	X_h^{L+1}
	:=
	\widetilde X_h^L1_{\Delta_q}
	+
	\bar Z_h1_{\mathbb Z\setminus\Delta_q}.
	\end{align}
	Also set
	\[
	f_K^{L+1}:=(1-\gamma)^{-1}\widetilde f_K^L1_{\Delta_q}.
	\]
	The definition of \(X_K^{L+1}\) is exactly the one in \cite[Section~8]{L9}: one
	uses the auxiliary sets \(\Gamma_s,\Gamma_t\) from \cite[(8.6)]{L9} to force the
	exact distribution
	\[
	X_K^{L+1}\stackrel{d}{=}Y_{L+1,\gamma,\alpha}\coloneq 
(1-\gamma)^{-1}\widetilde X_K^L1_{\Delta_q}1_{\Gamma_s}
+\alpha1_{\Psi_q}1_{\Gamma_t}.
	\]
	We check the four properties of \(\operatorname{Step}_{\Psi}(K,M,L+1)\).

\medskip
    
	First, the \(\ell^1\)-bounds for \(f_h^{L+1}\) are unchanged. For
	\(1\leq h\leq K-1\), LaVictoire's estimate gives
	\[
	\mathbb E f_h^{L+1}
	\leq
	(1-\gamma+2\varepsilon)\mathbb E f_h^L
	+
	\gamma\mathbb E g_h
	\leq
	(1+4\varepsilon)^{(K-1)M+L+1}.
	\]
	For \(h=K\), the estimate follows from the definition
	\[
	f_K^{L+1}=(1-\gamma)^{-1}\widetilde f_K^L1_{\Delta_q}
	\]
	and the same estimate as in \cite[Section~8, verification of property~(1)]{L9}.
	These estimates do not use the special form of \(\psi\).

\bigskip
    
	Second, the distribution and pairwise independence of
\(X_1^{L+1},\dots,X_K^{L+1}\) are verified exactly as in
\cite[Section~8, verification of property~(2)]{L9}. The construction splits the
space into the pieces \(\Delta_q\), \(\Psi_q\), and the remaining region, and on
each piece the conditional distributions are the same as in LaVictoire's
argument. The recombination identity \cite[(8.7)]{L9} then gives the desired
pairwise independence. This part of the proof does not use the control function
\(\psi\), and hence is unchanged with \(\Psi\).

\bigskip

Third, the exceptional set satisfies
\[
E_{L+1}
=
E_L^1(\omega)\cup E_L^2\cup \bar E',
\]
and therefore
\[
\mathbb P(E_{L+1})
\leq
\mathbb P(E_L^1(\omega))
+
\mathbb P(E_L^2)
+
\mathbb P(\bar E').
\]
By the estimates above,
\[
\mathbb P(E_L^1(\omega))\leq \frac{\delta}{2},
\qquad
\mathbb P(E_L^2)\leq \frac{\delta}{4},
\qquad
\mathbb P(\bar E')\leq \frac{\delta}{4}.
\]
The first estimate is obtained by choosing \(p\) sufficiently large, the second
by choosing \(q\) sufficiently large so that the boundary term involving
\(\Psi(AT_L)\) is small, and the third is part of the restriction lemma
\cite[Lemma~7.1]{L9}. Hence
\[
\mathbb P(E_{L+1})\leq
\frac{\delta}{2}
+
\frac{\delta}{4}
+
\frac{\delta}{4}
=
\delta.
\]

\bigskip

Fourth, we verify residue domination and local periodicity. Fix
\(x\notin E_{L+1}\), and put \(Q:=Q_{x,L+1}\). We consider separately
whether \(x\in\Delta_q\) and whether \(h<K\).

\medskip

Suppose first that \(x\in\Delta_q\). Then
\(Q=\widetilde Q_{x,L}\). Since \(x\notin E_L^2\), we have $x+t\in\Delta_q$ whenever $0\leq t\leq\Psi(AT_L).$
In particular, this holds for every representative
\(a\in\Lambda_Q\), since \(1\leq a\leq Q\leq T_L\), and also for
\(t=y-Q\) and \(t=y\) whenever
\[
Q\leq y\leq\Psi(AQ)\leq\Psi(AT_L).
\]
The exclusion of \(E_L^1(\omega)\) ensures that the rearranged family
\[
\widetilde f_1^L,\dots,\widetilde f_K^L,\qquad
\widetilde X_1^L,\dots,\widetilde X_K^L,\qquad
\widetilde Q_{x,L}
\]
satisfies the residue-domination and local-periodicity conclusions inherited
from \(\operatorname{Step}_{\Psi}(K,M,L)\).

If \(1\leq h\leq K-1\), the relevant points all lie in \(\Delta_q\), and hence
\[
\frac1{|\Lambda_Q|}
\sum_{a\in\Lambda_Q}f_h^{L+1}(x+a)
=
\frac1{|\Lambda_Q|}
\sum_{a\in\Lambda_Q}\widetilde f_h^L(x+a)
\geq
\widetilde X_h^L(x)
=
X_h^{L+1}(x).
\]
Similarly,
\[
f_h^{L+1}(x+y-Q)
=
\widetilde f_h^L(x+y-Q)
=
\widetilde f_h^L(x+y)
=
f_h^{L+1}(x+y).
\]

Now let \(h=K\). Since \(\Delta_q\cap\Psi_q=\varnothing\),
\[
X_K^{L+1}(x)
=
(1-\gamma)^{-1}\widetilde X_K^L(x)1_{\Gamma_s}(x)
\leq
(1-\gamma)^{-1}\widetilde X_K^L(x).
\]
Consequently,
\[
\begin{aligned}
\frac1{|\Lambda_Q|}
\sum_{a\in\Lambda_Q}f_K^{L+1}(x+a)
&=
\frac{1}{1-\gamma}
\frac1{|\Lambda_Q|}
\sum_{a\in\Lambda_Q}\widetilde f_K^L(x+a)\\
&\geq
\frac{1}{1-\gamma}\widetilde X_K^L(x)
\geq X_K^{L+1}(x).
\end{aligned}
\]
The same factor occurs at both points in the periodicity identity, so
\[
\begin{aligned}
f_K^{L+1}(x+y-Q)
&=(1-\gamma)^{-1}\widetilde f_K^L(x+y-Q)\\
&=(1-\gamma)^{-1}\widetilde f_K^L(x+y)
=f_K^{L+1}(x+y).
\end{aligned}
\]

\medskip

Suppose next that \(x\notin\Delta_q\). Then $Q=\bar Q'_x$ and  $x\in(-\Lambda_q)^\gamma$,
the latter following from
\(\mathbb Z_q\setminus(-\Lambda_q)^\gamma\subseteq\Delta_q\).

Let \(1\leq h\leq K-1\). Then
\(x\in(-\Lambda_q)^\gamma\setminus\bar E'\),
\(Q=Q_{x,L+1}=\bar Q'_x\), and
\(X_h^{L+1}(x)=\bar Z_h(x)\). Therefore, by
\cite[Lemma~7.1]{L9}, applied exactly as in
\cite[Section~8, verification of property~(4)]{L9},
\[
\frac1{|\Lambda_Q|}
\sum_{a\in\Lambda_Q}f_h^{L+1}(x+a)
\geq
\bar Z_h(x)
=
X_h^{L+1}(x).
\]
For local periodicity, note that \(q\mid Q\). Hence \(x+y-Q\) and \(x+y\)
have the same residue modulo \(q\), so they either both belong to \(\Delta_q\)
or both lie outside \(\Delta_q\). In the latter case the required identity
follows from the restricted family:
\[
\bar g_h(x+y-Q)=\bar g_h(x+y).
\]
In the former case both values are given by \(\widetilde f_h^L\). Since
\(\widetilde f_h^L\) is \(pT_L\)-periodic and \(pT_L\mid Q\), we again obtain
\[
f_h^{L+1}(x+y-Q)=f_h^{L+1}(x+y).
\]

Finally, let \(x\notin\Delta_q\) and \(h=K\). If \(x\in\Psi_q\), then,
since \(pqT_L\mid Q\), \cite[Lemma~6.1]{L9} gives
\[
\frac1{|\Lambda_Q|}
\sum_{a\in\Lambda_Q}f_K^{L+1}(x+a)
\geq\alpha
\geq\alpha1_{\Gamma_t}(x)
=X_K^{L+1}(x).
\]
If \(x\notin\Psi_q\), then \(X_K^{L+1}(x)=0\), and residue domination
follows immediately from \(f_K^{L+1}\geq0\).

Moreover, \(f_K^{L+1}\) is periodic with period \(pqT_L\), because
\(\widetilde f_K^L\) is \(pT_L\)-periodic and \(\Delta_q\) is \(q\)-periodic.
Since
\[
pqT_L\mid\bar Q'_x=Q,
\]
we have, for every \(y\),
\[
f_K^{L+1}(x+y-Q)=f_K^{L+1}(x+y).
\]

This proves residue domination and local periodicity in all four cases.

	Therefore,  all four properties of
	\(\operatorname{Step}_{\Psi}(K,M,L+1)\) hold. This completes the induction, and
	therefore proves the \(\Psi\)-version of \cite[Proposition~4.1]{L9}. Taking
	\(L=M\) gives Proposition~\ref{p:lavictoire-short-control}.
\end{proof}

We now  extract
Proposition~\ref{p:finite-weak-failure} from the LaVictoire-type family. The
difficult construction has already been isolated in
Proposition~\ref{p:lavictoire-short-control}. What remains is to convert its
residue domination into a short-interval maximal lower bound and then amplify by
the pairwise independence of the \(X_h\)'s.
The only new conversion is the following. If \(x\notin E\), then the proposition
supplies a stopping modulus \(Q_x\). By the choice of \(N(Q_x)\) and by the
definition of \(\Psi\), for every \(1\leq m\leq H_{N(Q_x)}\),
\[
Q_x
\leq
\bigl(N(Q_x)+m\bigr)^d
\leq
\Psi(Q_x).
\]
Hence the local \(Q_x\)-periodicity allows us to reduce
\(f_h(x+(N(Q_x)+m)^d)\) modulo \(Q_x\). The short-block counting lemma then
converts the residue domination
\[
\frac1{|\Lambda_{Q_x}|}
\sum_{a\in\Lambda_{Q_x}}f_h(x+a)
\geq
X_h(x)
\]
into a lower bound for the short-interval average with left endpoint \(N(Q_x)\).

\begin{proof}[Proof of Proposition~\ref{p:finite-weak-failure} from
	Proposition~\ref{p:lavictoire-short-control}]
	Let \(C>0\) be given. We shall construct a cyclic example for which the
	weak \((1,1)\) ratio is larger than \(C\).
	Let \(B>1\) be a large parameter, to be chosen at the end in terms of \(C\).
	Choose \(0<\gamma<\gamma_0\) dyadic and sufficiently small, and set
	\[
	M:=\left\lfloor \frac{B}{\gamma}\right\rfloor .
	\]
	We also choose \(K\) as in LaVictoire's amplification step,
	\[
	K:=\left\lfloor \frac{\gamma}{B\varepsilon}\right\rfloor ,
	\]
	where \(\varepsilon=\varepsilon_\gamma\) is the parameter in
	Proposition~\ref{p:lavictoire-short-control}. Taking \(\gamma\) sufficiently
	small ensures that \(K\) is as large as needed. This is exactly the role of the
	small-\(\gamma\) condition in LaVictoire's proof.
	
	Apply Proposition~\ref{p:lavictoire-short-control} with \(L=M\), \(A=1\),
	\(D=1\), and with \(\delta\le 1/4\) sufficiently small. We obtain a cyclic group
	\(\mathbb Z_{\mathcal T}\), non-negative functions \(f_1,\dots,f_K\), random
	variables \(X_1,\dots,X_K\), an exceptional set \(E\), and a stopping modulus
	\(Q_x\). Define
	\[
	f:=\sum_{h=1}^K f_h .
	\]
	Since $\mathbb E f_h\leq (1+4\varepsilon)^{KM}$ and $KM
	\leq
	\frac{\gamma}{B\varepsilon}\cdot \frac{B}{\gamma}
	=
	\frac1{\varepsilon},$
	we have
	\[
	\mathbb E f_h
	\leq
	(1+4\varepsilon)^{1/\varepsilon}
	\leq e^4
	\]
	after decreasing \(\gamma\), if necessary. Hence
	\[
	\|f\|_{\ell^1(\mathbb Z_{\mathcal T})}
	=
	\mathbb E f
	\leq
	K e^4 .
	\]
	
We next convert the residue domination \eqref{e:rdomest} into domination by a
short-interval average. Fix \(x\notin E\), and put
\[
Q:=Q_x,
\qquad
N_x:=N(Q_x),
\qquad
H_x:=H_{N_x}.
\]
By the definition of \(N(Q)\), we have \(H_x\geq 4Q\). Also, by the definition
of the control function \(\Psi\), for every \(1\leq m\leq H_x\),
\begin{align}\label{e:sizeofQpsi}
    Q
\leq
(N_x+m)^d
\leq
(N_x+H_x)^d
\leq
\Psi(Q).
\end{align}
Therefore the local \(Q\)-periodicity from
Proposition~\ref{p:lavictoire-short-control} applies with $y=(N_x+m)^d .$

We apply Lemma~\ref{l:short-block-count} with
\[
U=N_x,
\qquad
H=H_x,
\qquad
Q=Q_x.
\]
Thus, for every \(a\in\Lambda_Q\),
\[
\#\{1\leq m\leq H_x:(N_x+m)^d\equiv a\pmod Q\}
\geq
\frac{H_x}{4|\Lambda_Q|}.
\]
For each residue class \(a\in\Lambda_Q\), let \(b(a)\in\{1,\dots,Q\}\)
denote its standard representative. If
\[
(N_x+m)^d\equiv a\pmod Q,
\]
then $(N_x+m)^d=b(a)+tQ$
for some integer \(t\geq0\). Since \(x\notin E\), Proposition~\ref{p:lavictoire-short-control}
gives the local \(Q\)-periodicity identity
\[
f_h(x+y-Q)=f_h(x+y),
\qquad
Q\leq y\leq \Psi(Q).
\]
By \eqref{e:sizeofQpsi}, all intermediate values \(b(a)+sQ\), \(1\leq s\leq t\),
lie in this range:
\[
Q\leq b(a)+sQ\leq b(a)+tQ=(N_x+m)^d\leq\Psi(Q).
\]
Hence repeated use of the local \(Q\)-periodicity gives
\[
f_h\bigl(x+(N_x+m)^d\bigr)
=
f_h(x+b(a)).
\]
Hence
\begin{align*}
\frac1{H_x}\sum_{m=1}^{H_x} f_h\bigl(x+(N_x+m)^d\bigr)
&\geq
\frac1{H_x}
\sum_{a\in\Lambda_Q}
f_h(x+b(a))
\#\{1\leq m\leq H_x:(N_x+m)^d\equiv a\pmod Q\} \\
&\geq
\frac1{4|\Lambda_Q|}
\sum_{a\in\Lambda_Q} f_h(x+b(a)) \geq
\frac14 X_h(x).
\end{align*}
	Summing over \(h\), we obtain
	\[
	\frac1{H_x}
	\sum_{m=1}^{H_x}
	f\bigl(x+(N_x+m)^d\bigr)
	\geq
	\frac14\sum_{h=1}^K X_h(x).
	\]
	Since \(N_x=N(Q_x)\in\mathcal N\), this gives the pointwise maximal lower bound
	\[
	\sup_{N\in\mathcal N}
	\frac1{H_N}
	\sum_{m=1}^{H_N}
	f\bigl(x+(N+m)^d\bigr)
	\geq
	\frac14\sum_{h=1}^K X_h(x)
	\]
	for every \(x\notin E\).
	
It remains to run LaVictoire's standard amplification argument
\cite[proof of Proposition~4.1]{L9}. We include the short calculation in order
to keep track of the constants. Since
\(X_h\stackrel{d}{=}Y_{M,\gamma,\alpha}\), we have
\[
\mathbb E X_h=1+M\alpha^2\gamma.
\]
By taking \(\gamma\) sufficiently small, we may assume that
\(M\gamma\geq B/2\), and hence
\[
\mathbb E X_h\geq \frac{B\alpha^2}{2}, \qquad \mathbb E X_h^2
\leq
2(1-\gamma)^{-M}
\leq
2e^{2B}.
\]
Since \(X_1,\dots,X_K\) are pairwise independent, Chebyshev's inequality gives
\[
\begin{aligned}
\mathbb P\left(
\frac1K\sum_{h=1}^K X_h
\leq
\frac{B\alpha^2}{4}
\right)
&\leq
\mathbb P\left(
\left|
\frac1K\sum_{h=1}^K X_h-\mathbb E X_1
\right|
\geq
\frac{B\alpha^2}{4}
\right)  \\
&\leq
\left(\frac{B\alpha^2}{4}\right)^{-2}
\frac1K\mathbb E (X_1-\mathbb E X_1)^2 \\
&\leq
\frac{32e^{2B}}{KB^2\alpha^4}.
\end{aligned}
\]
Choosing \(\gamma\) smaller if necessary makes \(K\) large enough that the last
quantity is at most \(1/2\). Therefore
\[
\mathbb P\left(
\sum_{h=1}^K X_h
\geq
\frac{KB\alpha^2}{4}
\right)
\geq
\frac12.
\]
Since \(\mathbb P(E)\leq\delta<1/4\), the set
\[
S:=
\left\{
x\notin E:
\sum_{h=1}^K X_h(x)
\geq
\frac{KB\alpha^2}{4}
\right\}
\]
has measure at least \(1/4\). On this set, the maximal lower bound already
proved gives
\[
\sup_{N\in\mathcal N}
\frac1{H_N}
\sum_{m=1}^{H_N}
f\bigl(x+(N+m)^d\bigr)
\geq
\frac{KB\alpha^2}{16}.
\]
Hence
\[
\left\|
\sup_{N\in\mathcal N}
\left|
\frac1{H_N}
\sum_{m=1}^{H_N}
f\bigl(x+(N+m)^d\bigr)
\right|
\right\|_{\ell^{1,\infty}(\mathbb Z_{\mathcal T})}
\geq
\frac{KB\alpha^2}{64}.
\]
Since \(\|f\|_{\ell^1(\mathbb Z_{\mathcal T})}\leq Ke^4\), we obtain
\[
\left\|
\sup_{N\in\mathcal N}
\left|
\frac1{H_N}
\sum_{m=1}^{H_N}
f\bigl(x+(N+m)^d\bigr)
\right|
\right\|_{\ell^{1,\infty}(\mathbb Z_{\mathcal T})}
\geq
\frac{B\alpha^2}{64e^4}
\|f\|_{\ell^1(\mathbb Z_{\mathcal T})}.
\]
Finally choose \(B\) so large that
\[
\frac{B\alpha^2}{64e^4}>C.
\]
This proves Proposition~\ref{p:finite-weak-failure}.
\end{proof}

	\section{Failure of pointwise convergence along dense times}

	%\section{counterexample}  
	%By \cite[Corollary 5]{BJR_90_moving_avgs}, for every $p \geq 1$ and for $0< c< 1$, 
	%\begin{align*}
	%	\lim_{k \rightarrow \infty} \frac{1}{2^{kc}} \sum_{j=0}^{ \lfloor 2^{kc} - 1 \rfloor } f(T^{2^k + j})
	%\end{align*}
	%does not exist a.e. for some $f \in L^p$. The above averages satisfy the so-called `strong sweeping out property' (which is a stronger property than divergence for $f \in L^p$ for some $p \geq 1$). 
	%
	%
	%Instead of extending the construction in \cite{BJR_90_moving_avgs}, let us instead proceed in the following way. 
	
	In this section we prove that for an aperiodic measure-preserving transformation $T$ on a non-atomic Lebesgue probability space $(X, \mathcal{B}, \mu)$, the averages 
	\begin{align*}
		A_{j_N}^{} f(x) = \frac{1}{\floor{j_N^c}} \sum_{n\in [j_N, j_N + j_N^c)} f(T^{P(n)}x) = \frac{1}{\floor{j_N^c}} \sum_{ n \in  [0,  j_N^c)} f(T^{P(j_N + n)}x)
	\end{align*}
	do not converge a.e. for some $f \in L^p$, where $p \geq 1$. In fact we will show that the averages $A_{j_N}$ have the \textit{strong sweeping out property} provided that $j_N$ satisfy certain growth conditions. The main technical tool we use is \cite[Theorem 1.14]{SSOP_96}. To this end,  for the convenience of the reader we reiterate some relevant definitions.
	
	\begin{definition}
		A sequence of operators $(T_n)$ has the \textit{strong sweeping out property} if given $\ep>0$ there is a set $B$ with $\mu(B) < \ep$ such that
		\begin{align*}
			\limsup_{n \rightarrow \infty} T_n \chi_B (x) =1 \ \ \textrm{a.e.}  \hspace{1in} \liminf_{n \rightarrow \infty} T_n \chi_B (x) =0 \ \ \textrm{a.e.} 
		\end{align*}
	\end{definition}

	\begin{definition}
		A sequence of measures $\left(\nu_n\right)_{n \in \N}$ supported on a countable set $\mathcal{W}$ is called \textit{dissipative} if for each $w \in \mathcal{W}$ we have that \[ \lim_{N \to \infty} \nu_N(w)=0.  \]
	\end{definition} 
	\begin{proposition} \label{p:sweepingout}
		Let $\left\{j_n\right\}_{n \in \N}$ be a subsequence of the positive integers with the property that \[ \lim_{n \to \infty} \frac{\log\left(j_{n+1}\right)}{\log\left(j_n\right)}=1, \quad \lim_{n \to \infty} j_n=+ \infty.  \] Then the averages $\left\{A_{j_n}^{P,c}\right\} $ have the strong sweeping out property for all $P \in \Z[x]$ with $\deg(P) \geq 2$ and all $c \in (0,1).$
	\end{proposition}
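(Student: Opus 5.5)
The plan is to verify the hypotheses of \cite[Theorem~1.14]{SSOP_96} for the measures
\[
\nu_n:=\frac{1}{\floor{j_n^c}}\sum_{h\in[0,j_n^c)}\delta_{P(j_n+h)},
\qquad
\widehat{\nu_n}(\xi)=\mathbf m_{j_n}(\xi)=\mathop{\EE}\limits_{h\in[0,j_n^c)}e\bigl(-\xi P(j_n+h)\bigr).
\]
I am using that criterion in the following form, which I have not checked against the source and which is the point I am least sure of. The sequence $(\nu_n)$ is dissipative, and there is a fixed $z\neq1$ such that for every $N$ and every $\epsilon>0$ one can find indices $n_1<\cdots<n_N$ and frequencies $\xi_1,\dots,\xi_N$ with
\[
|\widehat{\nu_{n_i}}(\xi_i)-1|<\epsilon
\quad\text{and}\quad
|\widehat{\nu_{n_l}}(\xi_i)-z|<\epsilon\ \text{ for } l\neq i .
\]
Dissipativity is immediate. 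Since $\deg P\ge 2$ and $j_n\to\infty$, we have $\min_{h}|P(j_n+h)|\to\infty$. Hence every fixed $w\in\Z$ has $\nu_n(w)=0$ for all large $n$.

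\emph{Step 1: linearizing the multiplier.} By the mean value theorem, $|P(j+h)-P(j)|\lesssim_P j^{d-1}h\le j^{d-1+c}$ for $0\le h<j^c$. This gives
\[
\mathbf m_j(\xi)=e\bigl(-\xi P(j)\bigr)+O_P\bigl(j^{d-1+c}|\xi|\bigr).
\]
This approximation is useful only when $|\xi|\ll j^{-(d-1+c)}$. Yet we need the phases $\xi P(j)$ to be of size $\gtrsim1$ at every selected scale. Both requirements can hold at once if all selected scales lie in a window $[J,J^{1+\kappa}]$ with $\kappa>0$ so small that $(1+\kappa)(d-1+c)<d$; this is possible because $c<1$. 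We will take $|\xi|\sim P(J)^{-1}\sim J^{-d}$. At every scale $j\le J^{1+\kappa}$ the error is then $\lesssim J^{(1+\kappa)(d-1+c)-d}\to0$, while $|\xi P(j)|\gtrsim1$ for every $j\ge J$.

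\emph{Step 2: choosing the scales.} The hypothesis $\log j_{n+1}/\log j_n\to1$ is exactly what is needed to place many well-separated scales inside such a window. Fix $N$ and a large ratio $R$. For $J=j_m$ with $m$ large, consecutive terms satisfy $j_{n+1}\le j_n^{1+\kappa/(2N)}$ throughout the window, so the window $[J,J^{1+\kappa}]$ contains terms of the sequence at every logarithmic height. Greedily pick $j_{k_1}=J<j_{k_2}<\cdots<j_{k_N}\le J^{1+\kappa}$ so that $P(j_{k_{i+1}})/P(j_{k_i})\ge R$ for each $i$. This is possible as soon as $J^{\kappa/2}\ge R^{N}$, which holds for $J$ large.

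\emph{Step 3: choosing the frequencies.} Take $z=e(1/2)=-1$. For each $i$ we must find $\xi_i$ with $\xi_iP(j_{k_i})\equiv0$ and $\xi_iP(j_{k_l})\equiv\tfrac12$ for $l\ne i$, each up to error $\epsilon$ modulo $1$, and with $|\xi_i|\lesssim P(J)^{-1}$. I would build $\xi_i$ by nested intervals, proceeding from the smallest scale upward.
\begin{enumerate}
\item Start with $I_0=[P(J)^{-1},2P(J)^{-1}]$. On $I_0$ the phase $\xi P(j_{k_1})$ sweeps a full unit interval.
\item Choose $I_1\subseteq I_0$ of length $\epsilon/P(j_{k_1})$ on which $\xi P(j_{k_1})$ lies within $\epsilon$ of its target value ($0$ or $\tfrac12$).
\item Because $P(j_{k_2})\ge R\,P(j_{k_1})$ with $R\gg\epsilon^{-1}$, the phase $\xi P(j_{k_2})$ completes at least $\epsilon R\ge1$ full turns on $I_1$. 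So we can choose $I_2\subseteq I_1$ of length $\epsilon/P(j_{k_2})$ meeting the second target, and continue in the same way.
\item After $N$ steps, any point of $I_N$ serves as $\xi_i$.
\end{enumerate}
Combining this with Step 1, $\mathbf m_{j_{k_l}}(\xi_i)$ lies within $O(\epsilon)+o_{J\to\infty}(1)$ of $1$ when $l=i$ and of $-1$ when $l\ne i$. This is the required diagonal/off-diagonal pattern.

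I expect the main obstacle to be matching the exact form of \cite[Theorem~1.14]{SSOP_96}: whether it requires a single fixed $z$, and whether it asks for closeness to $z$ at all off-diagonal pairs or only for $l>i$. The nested construction above handles any prescribed target pattern and any fixed $z$ on the unit circle with $z\neq1$, so I expect it to adapt to whichever version the theorem states. The remaining steps are routine bookkeeping of $\epsilon$, $R$, $\kappa$ and $J$.
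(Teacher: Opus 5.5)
Your proposal is correct and is essentially the paper's proof. Both verify \cite[Theorem~1.14]{SSOP_96}; your form with $z=e(1/2)$ is the paper's form with $\alpha=0$, $\beta=1/2$, $\delta_0=1/4$. Both linearize the multiplier with error $O(j^{d-1+c}|\xi|)$, use $\log j_{n+1}/\log j_n\to1$ to fit $N$ separated scales into a window $[J,J^{1+\kappa}]$ with $(1+\kappa)(d-1+c)<d$, and choose the frequencies scale by scale from the smallest scale up.

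The differences are only in execution. You use nested intervals, a fixed ratio $R\gg\epsilon^{-1}$, and phases measured against $P(j)$. The paper uses explicit recursive coefficients $A_i^a$, power separation $j_{k_{i+1}}\geq j_{k_i}^{\tau_2}$, and phases measured against $a_dj^d$, which is why it needs $\beta=a_d/2^{\zeta}$.
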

	\begin{proof}
		Initially, let us note that the associated probability measures and their corresponding Fourier transforms are 
		\begin{align*}
			K_{j_N} (m) = \frac{1}{\floor{j_N^c}} \sum_{n \in [1,  j_N^c]} \de_{P(j_N + n)} (m), \hspace{.2in} \hspace{.2in} m_{j_N}(\xi) = \frac{1}{\floor{j_N^c}} \sum_{n \in [1, j_N^c]} e(P(j_N+n) \xi ) . 
		\end{align*} 	It remains to check that the hypotheses of \cite[Theorem 1.14]{SSOP_96} apply. For $\left( K_{j_N}\right)_{N \in \N}$ we have that $\mathcal{W}=\Z$.  Fixing $m \in \Z$ we have that for large $N$, $K_{j_N}(m)=0$ so the dissipative property is immediate. It thus remains to find $\delta_0>0$ s.t. given $N\in \N$ large, and a small number $1 >  \eta>0$, we can find $\alpha, \beta$ s.t. $\delta_0< |\alpha - \beta| < 1 - \delta_0$, integers $k_1< \ldots< k_N$, $j_{k_1}< \ldots < j_{k_N}$, and frequencies $\xi_1, \ldots, \xi_N$ such that 
		\begin{align*}
			\begin{cases}
				|m_{j_{k_i}} (\xi_i) - e(\alpha)| \lesssim \eta \hspace{.2in} i=1, \ldots, N\\
				|m_{j_{k_m}}(\xi_i) - e(\beta)|  \lesssim \eta \hspace{.2in} i \neq m. 
			\end{cases}
		\end{align*}
		Let $\alpha =0$, $\beta=\frac{a_d}{2^\zeta}$ where $\zeta$ is chosen such that  $  \frac{1}{4}<\frac{|a_d|}{2^\zeta} \leq \frac{1}{2}. $  The choice of $\delta_0$ is   $\delta_0 = \frac{1}{4}$.

		Observe that 
		\begin{equation} \label{eq:phase}
			m_{j_k}(\xi) = \frac{1}{\floor{j_k^c}} \sum_{n \in [1,  j_k^c]} e(P(j_k+n)\xi ) = e(a_d j_k^d\xi )+ O \left( j_k^{d-1+c}\xi  \right)
		\end{equation}
		since one can easily note that 	$P(j_k+n)\xi=a_dj_k^d\xi+O\left(j_k^{d-1+c}\xi\right)$. 
		
		We choose parameters $\tau_1,\tau_2>1$ very close to $1$ with the properties that \[\exists n_0: \;\forall n \geq n_0 \quad j_{n+1} \leq  j_{n}^{\tau_1}, \quad (\tau_1 \tau_2)^N< \frac{d}{d-1+c} \] and an index $k_1 \geq n_0$ very large so that $\max \left\{ j_{k_1}^{1-\tau_2},j_{k_1}^{(\tau_1\tau_2)^N(d-1+c)-d}\right\} \leq \frac{\eta}{5N|a_d|} .$  We define the sequence $\left\{k_i\right\}_{i \in [N]}$ recursively by the formula \[ k_{i+1}= \min \left\{ n \geq k_i: j_{n}\geq j_{k_i}^{\tau_2 } \right\}. \]  The construction allows us to conclude that $j_{k_i}^{\tau_2} \leq j_{k_{i+1}} \leq j_{k_i}^{\tau_1\tau_2} $	which by iterating yields \[j_{k_i} \geq j_{k_{i-t}}^{\tau_2^{t}}, \quad j_{k_N} \leq j_{k_1}^{(\tau_1\tau_2)^{N-1}} \]
		
		We define the frequencies \[ \xi_i=\sum_{ a \in [N]}  \frac{A_{i}^a}{j_{k_a}^{d}} \] where $\left\{A_{i}^a\right\}_{a,i \in [N]}$ are elements of the torus and  are defined recursively as \[ A_{i}^a= \left\{\begin{matrix}  \left\{\frac{1}{2^\zeta}-\sum_{t<a} A_{i}^t \left(\frac{j_{k_a}}{j_{k_t}}\right)^d\right\}, \quad a \neq i
			\\  \left\{-\sum_{t<a} A_{i}^t \left(\frac{j_{k_a}}{j_{k_t}}\right)^d\right\}, \quad a =i. 
		\end{matrix}\right.    \] 
		
		The construction above enables us to verify the following crucial  property  \begin{equation} \label{eq:phase2}
			\left| m_{j_{k_i}}(\xi_{m})-e(a_dj_{k_i}^d\xi_{m}) \right| \leq  \frac{\eta}{5}, \forall m,i \in [N]
		\end{equation}
		
		which follows directly  from $j_{k_i}^{d-1+c}|\xi_m| \leq  \frac{\eta}{5}, \forall m,i \in [N] $,   a byproduct of the fact that $ j_{k_i}^{d-1+c}|\xi_m| \leq   N \frac{j_{k_1}^{(\tau_1\tau_2)^{N-1}(d-1+c)}}{j_{k_1}^d} \leq \frac{\eta}{5}$,  and \eqref{eq:phase}. Finally, it is clear that the proof will be complete once we verify the estimates below simply by combining them with \eqref{eq:phase2}   \[ \begin{split}
			&	\left \| j_{k_i}^d \xi_i \right \|_{\TT} \leq \frac{\eta }{5}, \quad  \left \| a_{d} \left( j_{k_i}^d \xi_m-\frac{1}{2^{\zeta}} \right) \right \|_{\TT} \leq \frac{\eta}{5}, \; m \neq i.
		\end{split}  \] Utilizing the construction of the coefficients $A_{i}^a$ and therefore the frequencies as well as the growth of $\left\{j_{k_i}\right\}_{i \in [N]}$ we have that \[ \begin{split}
			&\left \| j_{k_i}^d \xi_i \right \|_{\TT}= \left \| \sum_{t > i } A_{i}^t \left(\frac{j_{k_i}}{j_{k_t}}\right)^d \right \|_{\TT}  \leq \sum_{t>i} \left(\frac{j_{k_i}}{j_{k_i}^{\tau_2^{t-i}}}\right)^d \leq \frac{N}{j_{k_i}^{\tau_2-1}} \leq \frac{\eta}{5} \\ & \left\| a_d \left( j_{k_i}^d \xi_m-\frac{1}{2^{\zeta}} \right) \right\|_{\TT}= \left \| a_d \sum_{t>i} A_{m}^t\left(\frac{j_{k_i}}{j_{k_t}}\right)^d   \right \|_{\TT} \leq \frac{\eta}{5}.
		\end{split} \]
	\end{proof}
	
	\bibliography{references}
	\bibliographystyle{amsplain}
\end{document}